\definecolor{green}{rgb}{0,0.5,0}
\definecolor{blue}{rgb}{0,0,1}
\theoremstyle{plain}
\newtheorem{neu}{}[section]
\newtheorem{Cor}[neu]{Corollary}
\newtheorem*{Cor*}{Corollary}
\newtheorem{Thm}[neu]{Theorem}
\newtheorem*{Thm*}{Theorem}
\newtheorem{Prop}[neu]{Proposition}
\newtheorem*{Prop*}{Proposition}
\theoremstyle{definition}
\newtheorem{Lemma}[neu]{Lemma}
\newtheorem*{Rmk*}{Remark}
\newtheorem{Rmk}[neu]{Remark}
\newtheorem*{Ex*}{Example}
\newtheorem*{Qu*}{Question}
\newtheorem{Def}[neu]{Definition}
\theoremstyle{remark}
\theoremstyle{definition}
\newcommand{\p}{\partial}
\newcommand{\om}{\omega}
\newcommand{\into}{\hookrightarrow}
\newcommand{\pf}{\longrightarrow}
\newcommand{\N}{{\mathbb{N}}}
\newcommand{\R}{{\mathbb{R}}}
\newcommand{\C}{{\mathbb{C}}}
\newcommand{\Id}{\mathrm{Id}}
\newcommand{\Map}{\mathrm{Map}}
\newcommand{\coker}{\mathrm{coker}}
\newcommand{\ind}{\mathrm{ind}}
\newcommand{\dvol}{\mathrm{dvol}}
\renewcommand{\AA}{\mathcal{A}}
\newcommand{\HH}{\mathcal{H}}
\newcommand{\FF}{\mathcal{F}}
\newcommand{\VV}{\mathcal{V}}
\newcommand{\XX}{\mathcal{X}}
\newcommand{\HHH}{\mathscr{H}}
\newcommand{\UU}{\mathscr{U}}
\newcommand{\x}{\times}
\newcommand{\beq}{\begin{equation}}
\newcommand{\beqn}{\begin{equation}\nonumber}
\newcommand{\eeq}{\end{equation}}
\newcommand{\bea}{\begin{equation}\begin{aligned}}
\newcommand{\bean}{\begin{equation}\begin{aligned}\nonumber}
\newcommand{\eea}{\end{aligned}\end{equation}}
\numberwithin{equation}{section}
\begin{document}
\title[Local invariant for scale structures on mapping spaces]{Local invariant for scale structures on mapping spaces}
\author{Jungsoo Kang}
\address{Department of Mathematical Sciences\\
     Seoul National University}
\email{hoho159@snu.ac.kr}

\begin{abstract}
Scale structures were introduced by H. Hofer, K. Wysocki, and E. Zehnder as a new concept of a smooth structure in infinite dimensions. We prove that scale structures on mapping spaces are completely determined by the dimension of domain manifolds. As a consequence, we give a complete description of the local invariant introduced by U. Frauenfelder for mapping spaces. Product mapping spaces and relative mapping spaces are also studied. Our approach is based on the spectral resolution of Laplace type operators together with the eigenvalue growth estimate.
\end{abstract}

\maketitle
\tableofcontents
\section{Introduction}
Scale structures were introduced by H. Hofer, K. Wysocki, and E. Zehnder to give a new concept of a smooth structure in infinite dimensions, see \cite{HWZ1,HWZ2} and the literature cited therein. One of the natural questions for understanding the geography of the new structures is about the existence of (local) invariants. The only local invariant for finite dimensional topological manifolds is the dimension. Apart from the finite dimensional case, there are no invariants for separable Hilbert spaces since all of them are isometric to $\ell^2$. It turned out in \cite{Fr1} that scale Hilbert spaces which are Hilbert spaces equipped with scale structures are separable. So one may think that there would be no invariants on scale Hilbert spaces as well; however, there exists some invariant coming from a nested sequence of $\ell^2$ spaces as studied by U. Frauenfelder \cite{Fr1}. Moreover he introduced fractal structures on scale Hilbert spaces on which he believes the right structure for a general setup of Floer theory. The local invariant he introduced can be expressed by simple formulas for fractal scale Hilbert spaces. In this paper, we focus on mapping spaces which are scale Hilbert manifolds. We show that scale structures on mapping spaces are completely determined by the dimension of domain manifolds. While proving this, we show that mapping spaces are fractal and give a complete description of the local invariants of them. \\[-1.5ex]

\noindent\textbf{Main Theorem.}
\textit{Two mapping spaces $\Map(N_1,M_1)$ and $\Map(N_2,M_2)$ are locally scale isomorphic if and only if $\dim N_1\!=\!\dim N_2$.}\\[-1.5ex]

Loosely speaking, this is the main result of the present paper.  Below we shall explain scale structures of mapping spaces and restate the main theorem precisely in Theorem A and Corollary A. Scale structures of product mapping spaces are also studied in Theorem B by taking advantage of the $*$-operation on fractal scale Hilbert spaces; moreover, these results go through for relative mapping spaces under the mixed boundary condition, see Corollary B. In fact, scale structures on mapping spaces are relevant to the order of elliptic self-adjoint operators as discussed in the appendix.

\begin{Def}\footnote{Our and Frauenfelder's definition of scale Hilbert spaces is somewhat different from Hofer-Wysocki-Zender's. Their definition only requires that the zeroth level $H_0$ is a Hilbert space.}
A {\em scale smooth structure} on a Hilbert space $H$ is a tuple
$$
\HH=\big\{(H_k,\langle\cdot,\cdot\rangle_k)\big\}_{k\in\N_0}
$$
where $(H_k,\langle\cdot,\cdot\rangle_k)$, $k\in\N_0=\N\cup\{0\}$ are Hilbert spaces and they build a nested sequence
$$
H=H_0\supset H_1\supset H_2\supset\cdots\supset H_\infty:=\bigcap_{k=0}^\infty H_k
$$
with the following two axioms.\\[-1.5ex]
\begin{itemize}
\item[(i)] For each $k\in\N_0$, the inclusion
$$(H_{k+1},\langle\cdot,\cdot\rangle_{k+1})\into (H_k,\langle\cdot,\cdot\rangle_k)$$
 is a compact operator.\\[-1.5ex]
\item[(ii)] The subspace $H_\infty$ is dense in $(H_k,\langle\cdot,\cdot\rangle_k)$ for every $k\in\N_0$.
\end{itemize}
\end{Def}
We will write $\HH^j$ to emphasize that we are dealing with the scale Hilbert space $H_j$ with the scale structure $(H_j)_k=H_{j+k}$ for $j,k\in\N_0$. The scale product of two scale Hilbert spaces $\HH$ and $\HH'$, $\HH\oplus_{sc}\HH'$ is defined by
$$
\big((H\oplus_{sc} H')_k,\langle\cdot,\cdot\rangle_k\big)=\big(H_k\oplus H'_k,\langle\cdot,\cdot\rangle_{H_k}\oplus\langle\cdot,\cdot\rangle_{H'_k}\big).
$$
A scale Hilbert space $\mathcal{Y}:=\{Y_k,\langle\,\cdot,\cdot\,\rangle_k\}_{k\in\N_0}$ is said to be a scale subspace of $\HH$ if $Y_k$ is a subspace of $H_k$ for all $k\in\N_0$. Moreover if $\mathcal{Y}$ is a closed scale subspace of $\HH$, the orthogonal complement of $\mathcal{Y}$ is defined by $\mathcal{Y}^\perp:=\{Y_k^{\perp_{\langle\cdot,\cdot\rangle_k}},\langle\,\cdot,\cdot\,\rangle_k\}$ where $Y_k^{\perp_{\langle\cdot,\cdot\rangle_k}}$ stands for the orthogonal complement of $Y_k$ with respect to $\langle\,\cdot,\cdot\,\rangle_k$. The definition of scale Hilbert manifolds is the obvious modification from the definition of standard manifolds, or see \cite{HWZ1}. The product operation for scale Hilbert manifolds is also defined in a similar vein and denoted by $\x_{sc}$.

\begin{Def}
Let $\HH$ and $\HH'$ be scale Hilbert spaces. A map $T:\HH\to\HH'$ is called a {\em scale operator} if it induces bounded linear operators on each level, i.e. the induced operators
$$
T|_{H_k}:H_k\pf H'_k,\quad k\in\N_0
$$
are bounded and linear. A scale operator $T:\HH\to\HH'$ is said to be a {\em scale isomorphism} if it is invertible, i.e. there exists a scale operator $T^{-1}:\HH'\to\HH$ such that
$$
T^{-1}\circ T=\Id_\HH,\quad T\circ T^{-1}=\Id_{\HH'}
$$
where $\Id_\HH$ and $\Id_{\HH'}$ are scale operators which induce the identity operators on every level. If there is a scale isomorphism between $\HH$ and $\HH'$, then we say that they are {\em scale isomorphic} and denote by $\HH\stackrel{sc}{\cong}\HH'$
\end{Def}

We recall the notion of fractal structures on scale Hilbert spaces studied in \cite{Fr2}. We define a Hilbert space $\ell^2_f$ for a monotone and unbounded function $f:\N\to(0,\infty)$ by
$$
\ell^2_f:=\Big\{x=(x_1,x_2,\cdots)\,\Big|\,x_\mu\in\R,\,\mu\in\N,\,\,\,\sum_{\mu=1}^\infty f(\mu)x_\mu^2<\infty\Big\}
$$
with the inner product
$$
\langle x,y\rangle_f:=\sum_{\mu=1}^\infty f(\mu)x_\mu y_\mu,\quad x,\,y\in\ell_f^2.
$$

We denote by $\widetilde\FF$ the set of functions $f:\N\to(0,\infty)$ being monotone and unbounded. We define the equivalence relation on this space: Two functions $f_1,f_2\in\widetilde\FF$ are called equivalent (write $f_1\sim f_2$) if there exists a constant $c>0$ such that
$$
\frac{1}{c}f_1(\mu)\leq f_2(\mu)\leq cf_1(\mu),\quad \textrm{for all }\;\mu\in\N.
$$
The quotient set of $\widetilde\FF$ by $\sim$ is denoted by
$$
\FF:=\widetilde\FF/\sim=\{[f]\,|\,f\in\FF\}.
$$

\begin{Def}
An scale Hilbert space $\HH$ is {\em fractal} if there exists $f\in\widetilde\FF$ such that $\HH$ is scale isomorphic to the scale Hilbert space $\ell^{2,f}$ given by
$$
\ell^{2,f}:=\big\{(\ell_{f^k}^2,\langle\cdot,\cdot\rangle_{f^k})\big\}_{k\in\N_0}.
$$
\end{Def}
One can easily check that $\ell^{2,f_1}$ and $\ell^{2,f_2}$ are scale isomorphic if $f_1\sim f_2$. In other words, an equivalence class $[f]\in \FF$ determines the structure of fractal scale Hilbert spaces.\\[-1.5ex]

In order to define Frauenfelder's invariant for scale Hilbert spaces, we consider a scale Hilbert pair which consists of a pair
$$
\HH_2=\{(H_0,\langle\cdot,\cdot\rangle_0),(H_1,\langle\cdot,\cdot\rangle_1)\}
$$
such that there exists a compact dense inclusion $H_1\into H_0$. Let
$$
\mathscr{S}_2:=\{\HH_2,\,\dim H_0=\infty\}/\sim_2
$$
where $\sim_2$ stands for the equivalence relation given by scale isomorphisms. It turned out in \cite{Fr1} that there exists a bijection
\bean
\Phi:\FF&\pf \mathscr{S}_2\\
[f]&\longmapsto [(\ell^2,\ell^2_f)]
\eea
In particular, for a scale Hilbert space $\HH$, every Hilbert space $(H_k,\langle\cdot,\cdot\rangle_k)$, $k\in\N_0$ is separable. Since every separable Hilbert space is isometric to $\ell^2$, there is no invariant for separable Hilbert spaces. However, scale Hilbert spaces do have the invariant as Frauenfelder introduced: Let $\mathscr{S}$ be the set of infinite dimensional scale Hilbert spaces modulo scale isomorphisms and $\Lambda$ be the upper triangle of $\N_0\x\N_0$, i.e.
$$
\mathscr{S}:=\{\HH,\,\dim H_0=\infty\}/\sim_2,\quad\Lambda:=\{(i,j)\in\N_0\x\N_0\,|\,i<j\}.
$$
Then the following map can be regarded as an {\em invariant} for scale Hilbert spaces.
$$
\mathfrak{K}:\mathscr{S}\pf \mathrm{Map}(\Lambda,\FF)
$$
defined by for $\HH\in\mathscr{S}$, $(i,j)\in\Lambda$,
$$
\mathfrak{K}([\HH])(i,j)=\Phi^{-1}\bigr([H_i,H_j]\bigr).
$$
This also gives a {\em local invariant} for scale Hilbert manifolds and we use the same symbol $\mathfrak{K}$ for that. The (local) invariant $\mathfrak{K}$ can be computed in fractal scale Hilbert spaces (or manifolds). If $\HH$ is scale isomorphic to $\ell^{2,f}$ for some $f\in\widetilde\FF$, then the local invariant for $\HH$ is of the form $\mathfrak{K}([\HH])(i,j)=[f^{j-i}]$.

\begin{Def}
A scale operator $T$ between two scale Hilbert spaces $\HH$ and $\HH'$ is said to be {\em Fredholm} if the following conditions hold.
\begin{enumerate}
\item $\ker T$ is finite dimensional scale subspace of $\HH$.
\item $\mathrm{im} \,T$ is a closed scale subspace of $\HH'$.
\item $\coker\,T:=\mathrm{im} \, T^\perp$ is a finite dimensional scale subspace of $\HH'$.
\end{enumerate}
The {\em index} of a scale Fredholm operator $T:\HH\to \HH'$ is
$$
\ind\, T:=\dim\ker T-\dim\coker\, T.
$$
\end{Def}

\begin{Rmk}
Since $\dim\ker T<\infty$, there exist closed subspaces $Y_k\subset H_k$, $k\in\N_0$ such that $H_k=\ker T\oplus Y_k$. It was proved that $Y_k$ can be chosen so that $\mathcal{Y}=\{Y_k,\langle\cdot,\cdot\rangle_k\}$ is indeed a scale subspace of $\HH$ and $\HH=\ker T\oplus_{sc} \mathcal{Y}$, see \cite{HWZ1}. It also holds that $\HH'=\mathrm{im}\,T\oplus_{sc} \mathrm{im}\,T^\perp$. Moreover by the open mapping theorem, $T|_\mathcal{Y}:\mathcal{Y}\to \mathrm{im} \,T$ is a scale isomorphism.

From the definition of scale-Fredholm, we can extract the regularity property: If $T:\HH\to \HH'$ is a scale Fredholm operator and there are $e\in H_0$ and $f\in H_j$ for some $j\in\N_0$ such that $Te=f$. Then $e\in H_j$ in fact. See \cite{HWZ1} for the proof.
\end{Rmk}

\begin{Def}
A scale operator $T:\HH^1\to \HH^0$ is called a {\em scale Hessian operator} if it is a scale Fredholm operator of index zero and symmetric, i.e. $\langle T\xi,\zeta \rangle_0=\langle \xi,T \zeta \rangle_0$ for any $\xi,\zeta\in H_1$.
\end{Def}

Frauenfelder gave the following evidence that fractal structure is the right structure for a general setup of Floer theory.

\begin{Thm}\cite{Fr2}\label{thm:thm of Fr}
A scale Hilbert space carrying a scale Hessian operator is fractal.
\end{Thm}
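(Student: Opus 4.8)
Let $T:\HH^1\to\HH^0$ be a scale Hessian operator. Since $T$ is scale Fredholm of index zero and symmetric, $\ker T=\coker T$ is finite-dimensional, and by the decomposition recalled in the Remark we may write $\HH=\ker T\oplus_{sc}\mathcal{Y}$ with $T|_\mathcal{Y}:\mathcal{Y}\to\mathrm{im}\,T$ a scale isomorphism. On the finite-dimensional summand $\ker T$ the scale structure is trivially fractal, and a finite scale product of fractal spaces is fractal; so it suffices to treat the restriction of $T$ to $\mathcal{Y}$, i.e.\ we may assume $T$ is a scale isomorphism which is symmetric and positive after a shift — more precisely, replacing $T$ by $T+c$ for a suitable constant $c>0$ (which does not change the scale structure on any level, only the norms up to equivalence) we may assume $T:\HH^1\to\HH^0$ is a positive self-adjoint operator on $H_0$ with domain $H_1$. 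This puts us in the setting of a single positive self-adjoint operator with compact resolvent, to which spectral theory applies.

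First I would apply the spectral theorem to $T$ viewed as an unbounded self-adjoint operator on the Hilbert space $(H_0,\langle\cdot,\cdot\rangle_0)$ with dense domain $H_1$: because the inclusion $H_1\into H_0$ is compact, $T$ has compact resolvent, hence a discrete spectrum $0<\lambda_1\le\lambda_2\le\cdots\to\infty$ with an orthonormal eigenbasis $\{e_\mu\}_{\mu\in\N}$ of $H_0$. The key identification is then that for each $k$ the $k$-th level norm is comparable to the graph-type norm of $T^k$: one shows $H_k=\dom(T^k)$ and $\langle x,x\rangle_k\sim\|T^k x\|_0^2$ (equivalently $\sim\sum_\mu\lambda_\mu^{2k}x_\mu^2$), where $x_\mu=\langle x,e_\mu\rangle_0$. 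Granting this, the map $x\mapsto(x_1,x_2,\dots)$ is a scale isomorphism from $\HH$ onto $\ell^{2,f}$ with $f(\mu):=\lambda_\mu^2$ (or any equivalent choice), and since $f$ is monotone and unbounded this exhibits $\HH$ as fractal.

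The main obstacle is precisely justifying the level-norm comparison $\langle x,x\rangle_k\sim\|T^kx\|_0^2$ for all $k\ge 2$ — for $k=0,1$ it is essentially the definition plus boundedness of $T$ and $T^{-1}$ on levels, but for higher $k$ one must iterate. Here the regularity property extracted from scale-Fredholmness in the Remark is essential: if $Te=f$ with $f\in H_j$ then $e\in H_{j+1}$ in fact, so one can bootstrap to get $H_k\subseteq\dom(T^k)$, and conversely $T:\HH^{k+1}\to\HH^k$ being a bounded scale operator (it is, being the restriction of a scale operator) gives the reverse inclusion together with the norm bound in one direction; the open mapping theorem on each level supplies the other direction. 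A secondary technical point is to make sure the shift $T\rightsquigarrow T+c$ and the constant-comparability of the resulting norms genuinely preserve the scale-isomorphism type and do not affect the equivalence class $[f]\in\FF$ — this is where passing to equivalence classes of weight functions pays off, since additive bounded perturbations of $\lambda_\mu$ change $f$ only within its class. Once these comparisons are in hand, the eigenvalue growth $\lambda_\mu\to\infty$ (guaranteed by compactness of the inclusions, via Weyl-type counting or simply discreteness of the spectrum of an operator with compact resolvent) finishes the argument.
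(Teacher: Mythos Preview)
The paper does not contain a proof of this theorem: it is stated with the citation \cite{Fr2} and no proof environment follows. So there is nothing in the present paper to compare your argument against; the author simply imports the result from Frauenfelder's paper.

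That said, your sketch is a sound reconstruction of how such a proof must go, and it uses exactly the ingredients the paper makes available (the scale-Fredholm splitting $\HH=\ker T\oplus_{sc}\mathcal{Y}$, the fact that $T|_{\mathcal{Y}}$ is a scale isomorphism, and the regularity property from the Remark). A couple of points deserve care. First, symmetry alone does not give self-adjointness; you should note explicitly that after passing to $\mathcal{Y}$ the operator $T:H_1\to H_0$ is a bijection, and a symmetric operator whose range is all of $H_0$ is automatically self-adjoint, so the spectral theorem applies and compactness of the inclusion $H_1\hookrightarrow H_0$ yields compact resolvent. Second, your claim ``a finite scale product of fractal spaces is fractal'' is not quite what you need for the kernel summand: $\ker T$ is finite-dimensional, and the point is rather that adjoining a finite-dimensional constant scale to $\ell^{2,f}$ only shifts $f$ by finitely many indices and hence stays in the same equivalence class in $\FF$. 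Third, the identification $H_k=\dom(T^k)$ with equivalent norms is indeed the crux; your bootstrap via the regularity property is the right mechanism, and since $T$ is a scale isomorphism $\HH^1\to\HH^0$ each $T:H_{k+1}\to H_k$ is a Banach isomorphism, so $T^k:H_k\to H_0$ is as well, giving $\|x\|_k\sim\|T^kx\|_0$ directly by the open mapping theorem. With these clarifications the argument is complete.
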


Now we are in a position to describe the main results of this paper. It is well-known that mapping spaces
$$
\Map(N,M)=\bigr\{\big(W^{k+k_0,2}(N,M),\langle\;\cdot\;,\;\cdot\;\rangle_{W^{k+k_0,2}(N,M)}\big)\bigr\}_{k\in\N_0}
$$
considered in various types of Floer theory carry a scale Hessian operator. Here $N$ is a compact Riemannian manifold and $M$ is an arbitrary manifold and $k_0$ is the smallest natural number satisfying $2k_0\!>\!n\!=\!\dim N$. They are scale Hilbert manifolds modeled on the following scale Hilbert spaces, see Proposition \ref{prop:mapping space is sc-manifold}.
$$
\XX(N,u^*TM)=\bigr\{\big(\Gamma^{k+k_0,2}(N,u^*TM),\langle\;\cdot\;,\;\cdot\;\rangle_{W^{k+k_0,2}(N,u^*TM)}\big)\bigr\}_{k\in\N_0}
$$
for $u\in C^\infty(N,M)$. According to Theorem \ref{thm:thm of Fr}, such mapping spaces are expected to have fractal scale structures locally. The above mapping spaces depend on $g$ the metric of $N$, but due to Corollary A below the mapping space with a different metric $g'$ is scale isomorphic to the original space; thus we do not indicate the choice of metrics for notational convenience.  We shall prove that this scale Hilbert space is fractal and moreover, the dimension of the domain manifold $N$ determines fractal scale structures and the local invariant of mapping spaces. The precise statements are given below.\\[-1.5ex]

\noindent\textbf{Theorem A.} \textit{ Let $E$ be a vector bundle over a closed Riemannian manifold $(N,g)$. A scale Hilbert space $\XX(N,E)$ is scale isomorphic to $\ell^{2,f}$ for $f(\mu)=\mu^{2/\dim N}$, $\mu\in\N$. In particular, the invariant $\mathfrak{K}$ is given by }
$$
\mathfrak{K}\,([\XX(N,E)])(i,j)=[\mu^{2(j-i)/\dim N}].\\[.5ex]
$$

This theorem will be proved in Section 3 and Corollary A below is a direct consequence of the theorem. It is worth mentioning that this result shows that components of mapping spaces are locally scale isomorphic.\\[-1.5ex]

\noindent\textbf{Corollary A.}
\textit{In consequence of Theorem A,  the local invariant $\mathfrak{K}$ for $\Map(N,M)$ is
$$
\mathfrak{K}\,([\Map(N,M)])(i,j)=[\mu^{2(j-i)/\dim N}].
$$
Moreover, let $(N_1,g_1)$ and $(N_2,g_2)$ be closed Riemannian manifolds and  $M_1$ and $M_2$ be any manifolds. Then $\dim N_1\!=\!\dim N_2$ if and only if $\Map(N_1,M_1)$ is locally scale isomorhpic to $\Map(N_2,M_2)$.}\\[-1.5ex]

\noindent\textbf{Theorem B.} \textit{Let $E_1$ and $E_2$ be vector bundles over closed Riemannian manifolds $N_1$ and $N_2$ respectively and let $\dim N_1\!\leq\!\dim N_2$. A product of scale Hilbert spaces $\XX(N_1,E_1)\oplus_{sc}\XX(N_2,E_2)$ is scale isomorphic to $\XX(N_1,E_1)$. Accordingly, $\Map(N_1,M_1)\x_{sc}\Map(N_2,M_2)$ is locally scale isomorphic to $\Map(N_2,M_2)$ for arbitrary manifolds $M_1,M_2$.}\\[-1.5ex]

Even if $N$ has nonempty boundary, we can draw the same conclusion as above by imposing the mixed boundary condition which generalizes both Dirichlet and Neumann boundary conditions.\\[-1.5ex]

\noindent\textbf{Corollary B.}
\textit{If a compact manifold $N$ has nonempty boundary, Theorem A, Corollary A, and Theorem B are true under the mixed boundary condition.}\\[-1.5ex]

In the appendix, we discuss the relations between scale structures (and hence the local invariant) of mapping spaces and the order of elliptic self-adjoint operators on elliptic complexes.

\subsubsection*{Acknowledgments}
I am grateful to my advisor Urs Frauenfelder for fruitful discussions. I also thanks to Jeong Hyeong Park for helpful communications.

\section{Preliminaries}

\subsection{Spectral resolution}
Let $(N,g)$ be an $n$-dimensional closed Riemannian manifold and $E$ be a vector bundle over $N$ equipped with a bundle metric $\langle \cdot,\cdot\rangle_E$. We denote the spaces of smooth sections of $E$ resp. $T^*N\otimes E$ by $\Gamma(N,E)$ resp. $\Gamma(N,T^*N\otimes E)$. We denote by $\Gamma^2(N,E)$ the completion of $\Gamma(N,E)$ with respect to the the $L^2$-product given by
$$
\langle\phi,\psi\rangle_{L^2(N,E)}=\int_N \langle\phi,\psi\rangle_E \,\dvol_N,\quad \phi,\psi\in \Gamma(N,E).
$$
We also need the $L^2$-product on $\Gamma(T^*N\otimes E)$
$$
\langle\phi,\psi\rangle_{L^2(N,T^*N\otimes E)}=\int_N \langle \phi,\psi\rangle_{T^*N\otimes E}\,\dvol_N,\quad \phi,\psi\in \Gamma(N,T^*N\otimes E).
$$
where $\langle \cdot,\cdot\rangle_{T^*N\otimes E}$ is the bundle metric on $T^*N\otimes E$ induced by $g$ and $\langle \cdot,\cdot\rangle_E$. If there is no confusion, we shall write $L^2$ instead of $L^2(N,E)$ and $L^2(N,T^*N\otimes E)$. We consider a Riemannian connection $\nabla:\Gamma(N,E)\to\Gamma(N,T^*N\otimes E)$ and take the formal $L^2$-adjoint operator of $\nabla$,
$$
\nabla^*:\Gamma(N,T^*N\otimes E)\to \Gamma(N,E),\quad \langle\nabla \phi,\psi\rangle_{L^2}=\langle \phi,\nabla^* \psi\rangle_{L^2}.
$$
Then the {\em Bochner Laplacian} is defined by
$$
\Delta:=\nabla^*\nabla:\Gamma(N,E)\pf\Gamma(N,E).
$$
This can be equivalently defined by $\Delta\phi=-\mathrm{trace}\nabla^2\phi$ where  $\nabla^2 \phi$ the second covariant derivative of $\phi\in\Gamma(N,E)$  induced by the connection $\nabla$ on $E$ together with the Levi-Civita connection on $T^*N$.\\[-1.5ex]

A real number $\lambda\in\R$ is called an {\em eigenvalue} if there is some nonzero $\phi\in\Gamma(N,E)$ satisfying $\Delta \phi=\lambda\phi$. Such a $\phi\in \Gamma(N,E)$ is called an {\em eigensection} associated to $\lambda$. The set of all eigenvalues of $\Delta$ is called the {\em spectrum} and denoted by
$$
\mathrm{Spec}(N)=\mathrm{Spec}(N,g)=\{\lambda_\mu\}_{\mu\in\N}=\{\lambda_1\leq\lambda_2\leq\cdots \leq\lambda_\mu\leq\cdots\}.
$$
We say that $\{\phi_\mu,\lambda_\mu\}_{\mu\in\N}$ is a {\em discrete spectral resolution} of $\Delta$ if the set $\{\phi_\mu\}_{\mu\in\N}$ is a complete orthonormal basis for $\Gamma^2(N,E)$ where $\phi_\mu\in\Gamma(N,E)$ so that $\Delta \phi_\mu=\lambda_\mu \phi_\mu$.

\begin{Thm}\label{thm:spectral resolution}
Let $\Delta$ be the Bochner Laplacian on $E$. Then the followings hold:
\begin{itemize}
\item[(i)] There exists a discrete spectral resolution of $\Delta$, $\{\phi_\mu,\lambda_\mu\}_{\mu\in\N}$.
\item[(ii)] There are only finitely many non-positive eigenvalues and $\lambda_\mu\sim C\mu^{2/n}$ for some constant $C>0$ as $\mu\to\infty$.
\end{itemize}
\end{Thm}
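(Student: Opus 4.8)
The plan is to obtain part (i) from the functional analysis of elliptic operators on a closed manifold and part (ii) from the small-time heat-trace asymptotics together with a Tauberian argument; both ingredients are classical, so the work is mainly in assembling them.

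For (i): the Bochner Laplacian $\Delta=\nabla^*\nabla$ is a formally self-adjoint second-order elliptic operator on $\Gamma(N,E)$ with principal symbol $\sigma_\Delta(x,\xi)=|\xi|_g^2\,\Id_{E_x}$, and it is nonnegative, $\langle\Delta\phi,\phi\rangle_{L^2}=\|\nabla\phi\|_{L^2}^2\ge 0$. Hence $\Delta+\Id$ is injective, and by standard elliptic theory on the closed manifold $N$ it extends to a topological isomorphism $\Gamma^{s+2,2}(N,E)\to\Gamma^{s,2}(N,E)$ of Sobolev spaces of sections, for every $s\in\N_0$. In particular $R:=(\Delta+\Id)^{-1}$ maps $\Gamma^2(N,E)=L^2$ continuously into $\Gamma^{2,2}(N,E)$, so composing with the compact Rellich inclusion $\Gamma^{2,2}(N,E)\hookrightarrow L^2$ shows that $R\colon L^2\to L^2$ is compact, self-adjoint, injective with dense range, and positive. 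The spectral theorem for compact self-adjoint operators gives a complete orthonormal basis $\{\phi_\mu\}_{\mu\in\N}$ of $L^2$ with $R\phi_\mu=\nu_\mu\phi_\mu$, where $\nu_\mu>0$ and $\nu_\mu\to 0$; setting $\lambda_\mu:=\nu_\mu^{-1}-1$ we get $\Delta\phi_\mu=\lambda_\mu\phi_\mu$ with $\lambda_\mu\to\infty$. Finally, elliptic regularity---bootstrapping $(\Delta+\Id)\phi_\mu=(\lambda_\mu+1)\phi_\mu\in L^2$ through all Sobolev levels and then applying Sobolev embedding---shows $\phi_\mu\in\Gamma(N,E)$. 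After relabelling so that $\lambda_1\le\lambda_2\le\cdots$, this is the required discrete spectral resolution.

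For (ii): since $\lambda_\mu\to\infty$, only finitely many $\lambda_\mu$ are non-positive; in fact nonnegativity of $\Delta$ leaves $0$ as the only possible non-positive eigenvalue, with eigenspace $\ker\nabla$ (the parallel sections), which is finite-dimensional. For the growth rate, I would invoke the classical heat-trace expansion for the Laplace-type operator $\Delta$: the operator $e^{-t\Delta}$ is of trace class for $t>0$ (seen from the Minakshisundaram--Pleijel parametrix, so as not to argue circularly), and
\[
\theta(t):=\sum_{\mu=1}^\infty e^{-t\lambda_\mu}=\mathrm{Tr}\,\big(e^{-t\Delta}\big)\sim (4\pi t)^{-n/2}\,(\mathrm{rank}\,E)\,\Vol(N,g)\qquad\text{as }t\downarrow 0.
\]
Applying Karamata's Tauberian theorem to the positive measure $\sum_\mu\delta_{\lambda_\mu}$, whose Laplace transform is $\theta$, converts this into the Weyl law for the counting function $\mathcal N(\lambda):=\#\{\mu\in\N:\lambda_\mu\le\lambda\}$,
\[
\mathcal N(\lambda)\sim \frac{(\mathrm{rank}\,E)\,\Vol(N,g)}{(4\pi)^{n/2}\,\Gamma(\tfrac n2+1)}\,\lambda^{n/2}\qquad\text{as }\lambda\to\infty.
\]
A routine inversion of this asymptotic---using that $\mathcal N$ is non-decreasing and $\lambda_\mu=\inf\{\lambda:\mathcal N(\lambda)\ge\mu\}$---yields $\lambda_\mu\sim C\mu^{2/n}$ with $C=\big(\tfrac{(4\pi)^{n/2}\Gamma(n/2+1)}{(\mathrm{rank}\,E)\Vol(N,g)}\big)^{2/n}>0$; only the exponent $2/n$ matters for what follows.

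The main obstacle is the heat-trace asymptotic underlying Weyl's law: proving $\theta(t)\sim(4\pi t)^{-n/2}(\mathrm{rank}\,E)\Vol(N,g)$ requires the parametrix construction for $e^{-t\Delta}$ (equivalently, one can bypass the heat kernel and run a Dirichlet--Neumann bracketing argument on a cover of $N$ by small coordinate cubes, comparing with Euclidean model Laplacians, at the cost of getting only the two-sided bound $c_1\mu^{2/n}\le\lambda_\mu\le c_2\mu^{2/n}$). I would state this as a known fact with references (e.g. Gilkey, or Berline--Getzler--Vergne) and devote the written proof to the functional-analytic and Tauberian parts, which are short.
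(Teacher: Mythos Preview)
Your proposal is correct and in fact supplies substantially more than the paper does: the paper's own ``proof'' is a two-line citation to Gilkey's book \cite{Gi1} and \cite{GLP}, deferring everything to the general result for elliptic self-adjoint operators of order $d$ (stated later as Theorem~\ref{thm:general spectral resolution}). What you have written is essentially the content one would find in those references---compact resolvent plus the spectral theorem for (i), heat-trace asymptotics plus Karamata for (ii)---so there is no divergence in strategy, only in level of detail.

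Two small remarks. First, your observation that the Bochner Laplacian is genuinely nonnegative (so the only possible non-positive eigenvalue is $0$, with finite-dimensional eigenspace $\ker\nabla$) is sharper than the theorem as stated; the weaker phrasing in the paper is there because the statement is meant to be a special case of the order-$d$ result, where negative eigenvalues can occur. Second, your parenthetical about Dirichlet--Neumann bracketing giving only $c_1\mu^{2/n}\le\lambda_\mu\le c_2\mu^{2/n}$ is worth emphasizing: for the purposes of this paper that two-sided bound is \emph{all} that is needed, since only the equivalence class $[\lambda_\mu]\in\FF$ enters the fractal structure, and $[\lambda_\mu]=[\mu^{2/n}]$ follows already from the cruder estimate.
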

\begin{proof}
The assertions hold for general elliptic self-adjoint operators of order 2 (e.g. self-adjoint Laplace type operators),  see Theorem \ref{thm:general spectral resolution}. We refer to the Gilkey's book \cite[Chapter 1]{Gi1} or \cite{GLP} for the proof.
\end{proof}

\begin{Rmk}\label{rmk:Laplace-Beltrami operator}
The simplest one among Laplace type operators is the Laplace-Beltrami operator $\Delta_0:C^\infty(N)\to C^\infty(N)$ on smooth function spaces  defined by
$$
\Delta_0u=-\mathrm{div}_g\nabla_g u=-\mathrm{trace}_g\mathrm{Hess}\,u
$$
where $\mathrm{div}_g$, $\nabla_g$, and $\mathrm{Hess}$ stands for the divergence, the gradient, and the Hessian respectively. In this case, the first assertion of the above theorem is proved by examining the Rayleigh quotient and the second assertion is nothing but the Weyl's asymptotic formula, see  \cite{Ber} or \cite{Ch}. The contractible component of a mapping space $\Map(N,M)$ is modeled on a scale Hilbert space $\Map(N,\R^m)$ where $m=\dim M$. If this is the case, the whole arguments of the present paper can be following with the Laplace-Beltrami operator.
\end{Rmk}

\subsection{Equivalence of Sobolev spaces}
A section of vector bundle $E\to N$ is said to be of class $W^{k,p}$ if all its local coordinate representations are in $W^{k,p}$. This definition is independent of the choice of coordinate charts even if $kp\leq n$. But in order to make a definition of maps of class $W^{k,p}$ between manifolds which does not depend on the choice of coordinate charts, we need the following well-known proposition which holds only for $kp>n$, see \cite[Appendix B]{MS}. For such a reason, we only deal with $W^{k,p}$-maps between manifolds for $kp>n$.
\begin{Prop}
Let $\Omega\subset \R^n$ be a bounded open domain with $C^k$ boundary. If $kp>n$ and $\varphi\in C^\infty(\R)$, then we have the following smooth map between Banach spaces.
$$
\bar \varphi_{k,p}:W^{k,p}(\Omega)\pf W^{k,p}(\Omega),\quad \bar \varphi_{k,p}(u)=\varphi\circ u.
$$
\end{Prop}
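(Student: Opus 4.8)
The plan is to exploit the hypothesis $kp>n$ through two standard facts about $W^{k,p}(\Omega)$ on a bounded domain with $C^k$ boundary: the Sobolev embedding $W^{k,p}(\Omega)\into C^0(\ol\Omega)\cap L^\infty(\Omega)$, and the resulting Banach algebra property $\|uv\|_{W^{k,p}}\le c\,\|u\|_{W^{k,p}}\|v\|_{W^{k,p}}$ (both rest on the existence of a bounded extension operator $W^{k,p}(\Omega)\to W^{k,p}(\R^n)$, which uses the $C^k$ regularity of $\p\Omega$). The argument then splits into three parts: (a) $\ol\varphi_{k,p}$ is well defined, i.e. $\varphi\circ u\in W^{k,p}(\Omega)$ whenever $u\in W^{k,p}(\Omega)$; (b) $\ol\varphi_{k,p}$ is continuous, in fact locally Lipschitz; (c) $\ol\varphi_{k,p}$ is of class $C^\infty$ with explicitly given derivatives.

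For (a) and (b) I would first work with $u\in C^\infty(\ol\Omega)$, which is dense in $W^{k,p}(\Omega)$. There the classical chain rule gives, for each multi-index $\alpha$ with $|\alpha|\le k$, the Fa\`a di Bruno formula $\p^\alpha(\varphi\circ u)=\sum (\varphi^{(j)}\circ u)\prod_{i=1}^{j}\p^{\beta_i}u$, the sum running over partitions with $\beta_i\neq 0$ and $\sum_i|\beta_i|=|\alpha|$. Each summand lies in $L^p(\Omega)$: the factor $\varphi^{(j)}\circ u$ is in $L^\infty$ since $u$ maps into the compact interval $[-\|u\|_{L^\infty},\|u\|_{L^\infty}]$ on which $\varphi^{(j)}$ is bounded, while $\prod_i\p^{\beta_i}u\in L^p$ follows by iterating the algebra property on the factors $\p^{\beta_i}u\in W^{k-|\beta_i|,p}$ (equivalently, from a Gagliardo--Nirenberg product inequality $\|\prod_i\p^{\beta_i}u\|_{L^p}\le C\prod_i\|u\|_{W^{k,p}}$, legitimate because $\sum_i|\beta_i|\le k$). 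Summing over $\alpha$ gives an a priori bound $\|\varphi\circ u\|_{W^{k,p}}\le P(\|u\|_{W^{k,p}})$ with $P$ continuous and depending only on sup-norms of $\varphi,\dots,\varphi^{(k)}$ on the pertinent interval. A parallel telescoping estimate, using that each $\varphi^{(j)}$ is Lipschitz on compact sets, gives $\|\varphi\circ u-\varphi\circ v\|_{W^{k,p}}\le L(\|u\|_{W^{k,p}},\|v\|_{W^{k,p}})\,\|u-v\|_{W^{k,p}}$ for smooth $u,v$. By density $\ol\varphi_{k,p}$ extends to a locally Lipschitz map on all of $W^{k,p}(\Omega)$; since $W^{k,p}\into C^0$, this extension is the genuine pointwise composition, and the Fa\`a di Bruno formula persists for all $u\in W^{k,p}(\Omega)$.

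For (c), the candidate for the $\ell$-th Fréchet derivative of $\ol\varphi_{k,p}$ at $u$ is the bounded symmetric $\ell$-linear map $(h_1,\dots,h_\ell)\mapsto(\varphi^{(\ell)}\circ u)\,h_1\cdots h_\ell$, whose boundedness is the algebra property together with the $L^\infty$-bound on $\varphi^{(\ell)}\circ u$; for $\ell=1$ this reads $D\ol\varphi_{k,p}(u)h=(\varphi'\circ u)h$. To verify that this is the derivative and that it varies continuously in $u$, one substitutes $t=u(x)$, $s=h(x)$ into Taylor's formula with integral remainder, $\varphi(t+s)-\sum_{j=0}^{\ell}\frac{s^j}{j!}\varphi^{(j)}(t)=\frac{s^{\ell+1}}{\ell!}\int_0^1(1-\tau)^\ell\varphi^{(\ell+1)}(t+\tau s)\,d\tau$, and bounds the $W^{k,p}$-norm of the remainder by the algebra property and the uniform continuity of $\varphi^{(\ell+1)}$ on compact sets; this yields $\big\|\ol\varphi_{k,p}(u+h)-\sum_{j=0}^{\ell}\frac{1}{j!}(\varphi^{(j)}\circ u)h^j\big\|_{W^{k,p}}=o\big(\|h\|_{W^{k,p}}^\ell\big)$, whence $\ol\varphi_{k,p}\in C^\ell$ for every $\ell$ with the asserted derivatives. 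The main obstacle throughout is the product estimate $\|\prod_i\p^{\beta_i}u\|_{L^p}\le C\prod_i\|u\|_{W^{k,p}}$ for $\sum_i|\beta_i|\le k$ (the crucial place where $kp>n$ and the $C^k$ boundary enter, via extension and Gagliardo--Nirenberg), together with the bookkeeping needed to see that the constants in (a)--(c) can be taken uniform on bounded subsets of $W^{k,p}(\Omega)$ --- it is exactly this uniformity that upgrades the formal Taylor expansion into genuine $C^\infty$ smoothness.
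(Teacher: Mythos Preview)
The paper does not give its own proof of this proposition: it is stated as ``well-known'' and the reader is referred to \cite[Appendix B]{MS}. So there is no paper's proof to compare against in the strict sense.

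Your outline is the standard route (and is essentially what one finds in the cited reference): use $kp>n$ to get the embedding $W^{k,p}(\Omega)\into C^0(\ol\Omega)$ and the Banach algebra property, establish well-definedness and local Lipschitz continuity via Fa\`a di Bruno plus a Gagliardo--Nirenberg product estimate on the derivative factors, and then obtain $C^\infty$-smoothness from Taylor's formula with integral remainder. One small point of phrasing: in step (c) you justify boundedness of $(h_1,\dots,h_\ell)\mapsto(\varphi^{(\ell)}\circ u)\,h_1\cdots h_\ell$ by ``the algebra property together with the $L^\infty$-bound on $\varphi^{(\ell)}\circ u$,'' but the algebra inequality needs $\varphi^{(\ell)}\circ u\in W^{k,p}$, not merely $L^\infty$; this is of course exactly what your step (a), applied to $\varphi^{(\ell)}\in C^\infty(\R)$, provides, so the argument is fine once that is made explicit. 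With that clarification your proposal is correct and matches the approach the paper defers to.
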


The following theorem is the Bochner-Weitzenb\"och formula. The Laplace-Beltrami operator in Remark \ref{rmk:Laplace-Beltrami operator} obviously extends to $C^\infty(N,\R^m)$.
\begin{Thm}\label{thm:Bochner-Weitzenboch formula}
Let $\Delta_0$ be the Laplace-Beltrami operator on $N$. Then locally  $\Delta=\Delta_0+R$ where $R$ is an endomorphism of $E$ involving only the curvature tensor.
\end{Thm}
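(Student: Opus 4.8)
The statement is local and classical, so the plan is to reduce it to a coordinate computation and then, for the precise form of the correction, to quote the standard references (e.g.\ \cite[Chapter~1]{Gi1}, \cite{GLP}). First I would fix a point $p\in N$, choose geodesic normal coordinates $x=(x^1,\dots,x^n)$ centred at $p$ and a local frame $e_1,\dots,e_r$ of $E$ near $p$ obtained by parallel transport of a basis of $E_p$ along radial geodesics (a synchronous frame). Writing a section as $\phi=\phi^a e_a$, the connection coefficients $\nabla_{\partial_i}e_a=\Gamma^b_{ia}e_b$, and the Levi-Civita Christoffel symbols $\Gamma^k_{ij}$ of $g$, and recalling that $\Delta\phi=-g^{ij}\nabla^2_{i,j}\phi$ with $\nabla^2_{i,j}=\nabla_i\nabla_j-\Gamma^k_{ij}\nabla_k$, a direct expansion of $\nabla_i\nabla_j\phi$ and a collection of terms gives, componentwise,
$$
(\Delta\phi)^a=-g^{ij}\big(\partial_i\partial_j-\Gamma^k_{ij}\partial_k\big)\phi^a+(\text{terms of order }\le 1\text{ in }\phi^c),
$$
where the coefficients of the error are built solely from $g^{ij}$, $\Gamma^b_{ia}$ and their first derivatives.

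The key point is then that the first term on the right is exactly $(\Delta_0\phi^a)$, so $\Delta-\Delta_0\otimes\mathrm{Id}_E$ is a differential operator of order $\le 1$; equivalently, replacing $\nabla$ by another connection alters $\nabla^*\nabla$ only by a first-order operator, so $\Delta$ has the same principal symbol $|\xi|^2_g\,\mathrm{Id}_E$ as $\Delta_0$. In the synchronous frame one has $\Gamma^b_{ia}(p)=0$ and $\Gamma^k_{ij}(p)=0$, so at $p$ the first-order part of the error vanishes and the surviving zeroth-order coefficients depend only on $\partial_i\Gamma^b_{ja}(p)$ and $\partial_k\Gamma^l_{ij}(p)$; rewriting these through the coordinate formula $F^\nabla_{ij}=\partial_i\Gamma_j-\partial_j\Gamma_i+[\Gamma_i,\Gamma_j]$ for the curvature of $\nabla$ (and through the Riemann tensor of $g$ for the Levi-Civita piece) exhibits the correction as an endomorphism $R\in\Gamma(N,\mathrm{End}(E))$ assembled algebraically from curvature, i.e.\ the Weitzenb\"ock curvature operator; since $p$ was arbitrary this yields $\Delta=\Delta_0+R$ locally. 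I would stress that for the purposes of this paper only the weaker consequence is used, namely that $\Delta$ is a self-adjoint Laplace type (elliptic, second order) operator, so that Theorem~\ref{thm:general spectral resolution} applies and the Sobolev norms attached to powers of $\Delta$ are equivalent to the standard $W^{k,2}$-norms.

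The only genuine work lies in the last step: organising the lower-order terms so that the first-order part is seen to cancel in the synchronous gauge and the remaining zeroth-order part is recognised as a curvature endomorphism rather than merely an operator of lower order. This bookkeeping is entirely standard and is carried out in the references above, so in the write-up I would present the local coordinate expansion, isolate the principal part $\Delta_0\otimes\mathrm{Id}_E$, and then invoke \cite[Chapter~1]{Gi1} for the identification of $R$ with the Weitzenb\"ock curvature term.
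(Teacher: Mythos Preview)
Your proposal is correct and, in essence, matches the paper's own treatment: the paper does not give an independent argument but simply refers to \cite[Chapter~4]{Gi1} for the proof. Your sketch of the local computation in normal coordinates with a synchronous frame is a faithful outline of exactly the standard argument found in that reference, and in fact provides more detail than the paper itself.
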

\begin{proof}
The proof can be found in \cite[Chapter 4]{Gi1}.
\end{proof}

Next, we recall a significant estimation for the Laplace-Beltrami operator, called the Calderon-Zygmund inequality.

\begin{Thm}\label{thm:interior regularity}
Let $1<p<\infty$, $k\geq0$ be an integer, and $\Delta_0$ be the Laplace-Beltrami operator on an open domain $\Omega\subset\R^n$. If $u\in C^\infty_c(\Omega)$, then there exists a constant $c>0$ satisfying
$$
||u||_{W^{k+2,p}(\Omega)}\leq c\big(||\Delta_0 u||_{W^{k,p}(\Omega)}+||u||_{L^p(\Omega)}\big).
$$
Accordingly, if $\Delta_0$ be the Laplace-Beltrami operator on a closed manifold $N$ and $u\in C^\infty(N)$, then there exists a constant $c>0$ satisfying
$$
||u||_{W^{k+2,p}(N)}\leq c\big(||\Delta_0 u||_{W^{k,p}(N)}+||u||_{L^p(N)}\big).
$$
Here the constant $c$ depends only on $k,p,$ and $\Omega$ (or $N$).
\end{Thm}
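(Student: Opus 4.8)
The plan is to reduce the estimate to the model case of the flat Laplacian on $\R^n$ by ``freezing the coefficients'', to prove the model estimate from the Calder\'on--Zygmund theory of singular integrals, and then to globalize by a finite partition of unity while reabsorbing all lower-order errors through interpolation. I would carry this out in three stages: (1) the constant-coefficient model estimate on $\R^n$; (2) the variable-coefficient estimate for functions supported in a small coordinate ball; (3) patching to all of $\Omega$ and to the closed manifold $N$.

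\emph{Stage 1: the model estimate.} Let $u\in C^\infty_c(\R^n)$. Taking Fourier transforms, $\widehat{\partial_i\partial_j u}(\xi)=-(\xi_i\xi_j/|\xi|^2)\,\widehat{\Delta_0 u}(\xi)$, so $\partial_i\partial_j u$ is obtained from $\Delta_0 u$ by the Fourier multiplier $-\xi_i\xi_j/|\xi|^2$, which is smooth and homogeneous of degree zero away from the origin; equivalently, it is a composition $R_iR_j$ of Riesz transforms, hence a Calder\'on--Zygmund operator. By the Mikhlin--H\"ormander multiplier theorem (or directly by the $L^p$-boundedness of Calder\'on--Zygmund operators), this operator is bounded on $L^p(\R^n)$ for every $1<p<\infty$, giving $||\partial_i\partial_j u||_{L^p}\le c_p\,||\Delta_0 u||_{L^p}$ and hence $||u||_{W^{2,p}}\le c_p\bigl(||\Delta_0 u||_{L^p}+||u||_{L^p}\bigr)$. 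Since constant coefficients commute with $\partial^\alpha$, applying this to $\partial^\alpha u$ for each $|\alpha|\le k$ and summing yields the higher-order model estimate $||u||_{W^{k+2,p}(\R^n)}\le c(k,p)\bigl(||\Delta_0 u||_{W^{k,p}(\R^n)}+||u||_{L^p(\R^n)}\bigr)$. The same argument applies to any constant-coefficient operator $L_0=-a^{ij}\partial_i\partial_j$ with $(a^{ij})$ symmetric and positive definite, after a linear change of variables diagonalizing $(a^{ij})$; the constant then depends in addition on the ellipticity bounds of $L_0$. This already settles the first inequality of the theorem when $\Omega$ carries the flat metric, since $u\in C^\infty_c(\Omega)$ may be extended by zero to $\R^n$.

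\emph{Stages 2 and 3: localization and patching.} In a coordinate chart the Laplace--Beltrami operator reads $\Delta_0=-g^{ij}\partial_i\partial_j+(\text{first-order terms})$ with smooth coefficients and $(g^{ij})$ positive definite. Fix $x_0$, let $L_0$ be obtained from $\Delta_0$ by freezing the principal coefficients $g^{ij}$ at $x_0$, and write $\Delta_0=L_0+B$. For $u$ supported in a ball $B_\rho(x_0)$, uniform continuity of the $g^{ij}$ gives that the principal part of $B$ has coefficients of size $O(\rho)$, so $||Bu||_{W^{k,p}}\le \varepsilon(\rho)\,||u||_{W^{k+2,p}}+C_\rho\,||u||_{W^{k+1,p}}$ with $\varepsilon(\rho)\to 0$ as $\rho\to 0$. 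Applying the Stage-1 estimate to $L_0u=\Delta_0u-Bu$, taking $\rho$ small enough that the $\varepsilon(\rho)$-term is absorbed on the left, and then using the interpolation inequality $||u||_{W^{k+1,p}}\le \delta\,||u||_{W^{k+2,p}}+C_\delta\,||u||_{L^p}$ (Ehrling's lemma) to remove the remaining term, gives the estimate for $u$ supported in a small coordinate ball. For arbitrary $u$ on $\Omega$ (respectively on the closed manifold $N$), one covers $\overline{\Omega}$ (respectively $N$) by finitely many such balls, picks a subordinate partition of unity $\{\chi_j\}$, applies the small-ball estimate to each $\chi_j u$, and sums; the commutators $[\Delta_0,\chi_j]$ are first-order operators, so the resulting error is again controlled by $||u||_{W^{k+1,p}}$ and absorbed by interpolation. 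This produces the global estimate with a constant depending only on $k$, $p$, and the geometry of $\Omega$ (or $N$).

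\emph{Main obstacle.} The only deep ingredient is Stage 1: the $L^p$-boundedness of the second-order Riesz multipliers $\xi_i\xi_j/|\xi|^2$, i.e.\ the Calder\'on--Zygmund/Mikhlin--H\"ormander theorem. This is exactly where the restriction $1<p<\infty$ is needed, and the estimate genuinely fails at $p=1$ and $p=\infty$; it is also the only step that is not ``soft''. Everything afterwards---freezing coefficients, the partition of unity, and the interpolation inequality---produces only lower-order errors that are reabsorbed. Since the paper cites \cite{Gi1} and \cite{GLP} precisely for this statement, it is legitimate to invoke the singular-integral estimate as a black box; a self-contained proof would additionally need the Calder\'on--Zygmund inequality for convolution operators whose kernel satisfies H\"ormander's condition.
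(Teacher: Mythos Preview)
Your sketch is correct and is exactly the standard route (Riesz transforms/Mikhlin--H\"ormander for the flat model, freezing of coefficients, partition of unity, absorption of lower-order terms via interpolation). The paper itself does not give a proof of this theorem at all: it simply refers to \cite[Chapter 8]{Jo} and \cite[Appendix B]{MS}, and what you wrote is precisely the argument one finds there. One small inaccuracy: you say the paper cites \cite{Gi1} and \cite{GLP} for this statement, but those are the references for the spectral resolution (Theorem \ref{thm:spectral resolution}); for the Calder\'on--Zygmund inequality the paper points to Jost and McDuff--Salamon.
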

\begin{proof}
The proof can be found in \cite[Chapter 8]{Jo} or \cite[Appendix B]{MS}.
\end{proof}

\begin{Def}
The $\Delta^{k,p}$-norm on $\Gamma(N,E)$ is defined by
\bean
||u||_{\Delta^{k,p}(N,E)}&:=||u||_{L^p}+||\nabla u||_{L^p}+||\Delta u||_{L^p}+|| \nabla\Delta u||_{L^p}\\
&\qquad+\cdots+||\nabla^{2(k/2-\lfloor k/2\rfloor)}\Delta^{\lfloor k/2\rfloor} u||_{L^p}.\\
\eea
Here $||\cdot||_{L^p}$ is either $||\cdot||_{L^p(N,E)}$ or $||\cdot||_{L^p(N,T^*N\otimes E)}$. In particular, the $\Delta^{k,2}$-norm is induced from the $\Delta^{k,2}$-product given by
\bean
\langle u,v\rangle_{\Delta^{k,2}(N,E)}&=\langle u,v\rangle_{L^2}+\langle \nabla u, \nabla v\rangle_{L^2}+\langle \Delta f,\Delta h\rangle_{L^2}+\langle \nabla\Delta u,\nabla\Delta v\rangle_{L^2}\\
&\qquad+\cdots+\langle \nabla^{2(k/2-\lfloor k/2\rfloor)}\Delta^{\lfloor k/2\rfloor} u,\nabla^{2(k/2-\lfloor k/2\rfloor)}\Delta^{\lfloor k/2\rfloor}v\rangle_{L^2}.
\eea
\end{Def}

\begin{Cor}\label{cor:equivalence}
On a vector bundle $E$ over a closed Riemannian manifold $N$, the $W^{k,p}$-norm and the $\Delta^{k,p}$-norm are equivalent for $1<p<\infty$. In particular, the Sobolev spaces defined by each of them coincide.
$$
\Gamma^{k,p}(N,E):=\overline{\Gamma(N,E)}^{\,||\cdot||_{W^{k,p}}}=\overline{\Gamma(N,E)}^{||\cdot||_{\,\Delta^{k,p}}}
$$
\end{Cor}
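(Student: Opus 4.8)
The plan is to prove the norm equivalence $\|u\|_{W^{k,p}}\sim\|u\|_{\Delta^{k,p}}$ on $\Gamma(N,E)$ by an induction on $k$, reducing everything to local estimates over coordinate patches trivializing $E$, where the Bochner--Weitzenb\"ock formula (Theorem \ref{thm:Bochner-Weitzenboch formula}) and the Calderon--Zygmund inequality (Theorem \ref{thm:interior regularity}) apply componentwise. One direction is the easy one: since $\nabla$ and $\Delta=\nabla^*\nabla$ are differential operators of order $1$ and $2$ respectively with smooth (hence bounded, by compactness of $N$) coefficients, each summand $\|\nabla^{2(k/2-\lfloor k/2\rfloor)}\Delta^{\lfloor k/2\rfloor}u\|_{L^p}$ appearing in $\|u\|_{\Delta^{k,p}}$ is bounded by $c\|u\|_{W^{k,p}}$; summing gives $\|u\|_{\Delta^{k,p}}\le c\|u\|_{W^{k,p}}$.

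For the reverse inequality I would argue by induction on $k$, the base cases $k=0$ (trivial) and $k=1$ (immediate, since $\|u\|_{\Delta^{1,p}}=\|u\|_{L^p}+\|\nabla u\|_{L^p}$ already controls $\|u\|_{W^{1,p}}$ up to bounded zeroth-order terms) being clear. For the inductive step, fix a finite cover of $N$ by charts over which $E$ is trivial, with a subordinate partition of unity $\{\rho_i\}$; write $u=\sum_i\rho_i u$. On each patch, in the local trivialization, $\Delta=\Delta_0\,\mathrm{Id}+(\text{first order})$ by Theorem \ref{thm:Bochner-Weitzenboch formula} (absorbing the curvature term $R$ and the lower-order connection terms). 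Applying Theorem \ref{thm:interior regularity} componentwise to the compactly supported function $\rho_i u$ bounds $\|\rho_i u\|_{W^{k+2,p}}$ by $c(\|\Delta_0(\rho_i u)\|_{W^{k,p}}+\|\rho_i u\|_{L^p})$. Rewriting $\Delta_0(\rho_i u)$ in terms of $\Delta(\rho_i u)$ plus lower-order terms, and then commuting $\rho_i$ past $\Delta$ (the commutator $[\Delta,\rho_i]$ is first order, hence controlled by $\|u\|_{W^{k+1,p}}$ on the patch), we get
\begin{equation}
\|\rho_i u\|_{W^{k+2,p}}\le c\big(\|\Delta u\|_{W^{k,p}}+\|u\|_{W^{k+1,p}}\big).\nonumber
\end{equation}
Summing over $i$ and invoking the inductive hypothesis to replace $\|\Delta u\|_{W^{k,p}}$ by $\|\Delta u\|_{\Delta^{k,p}}$ (which is a sub-collection of the terms in $\|u\|_{\Delta^{k+2,p}}$, shifted) and $\|u\|_{W^{k+1,p}}$ by $\|u\|_{\Delta^{k+1,p}}\le\|u\|_{\Delta^{k+2,p}}$, one closes the induction. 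One wrinkle is the parity bookkeeping: when $k$ is odd the top term of $\|u\|_{\Delta^{k,p}}$ involves an extra single $\nabla$, so the step should really be taken with care to track both parities simultaneously (equivalently, prove the statement for $k$ and $k+1$ together, stepping by $2$); the estimate $\|\nabla v\|_{W^{k,p}}\le c\|v\|_{W^{k+1,p}}$ together with the already-established even case handles the odd case.

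The main obstacle is purely bookkeeping rather than conceptual: one must handle the interpolation-type lower-order term $\|u\|_{W^{k+1,p}}$ that the Calderon--Zygmund estimate leaves behind, and make sure that after summing over the patches and peeling off partition-of-unity commutators, every term produced lies in the list defining some $\|u\|_{\Delta^{m,p}}$ with $m\le k+2$, so the induction genuinely closes. There is no analytic difficulty beyond the two cited theorems; the only thing to be vigilant about is that the coefficients in the local expressions for $\nabla$, $\nabla^*$ and $R$ are smooth on a compact manifold and hence contribute only bounded multiplicative constants, uniformly over the finitely many charts. Once the norm equivalence is established, the equality of the completions $\Gamma^{k,p}(N,E)$ is immediate, since equivalent norms on the common dense subspace $\Gamma(N,E)$ produce the same completion.
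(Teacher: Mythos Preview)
Your proposal is correct and follows essentially the same route as the paper: the easy direction via boundedness of the smooth-coefficient operators $\nabla,\Delta$, and the reverse direction by combining the Bochner--Weitzenb\"ock formula with the Calderon--Zygmund inequality to obtain an elliptic estimate, then iterating/inducting on $k$. The paper is terser---it simply asserts $\|\phi\|_{W^{k,p}}\le c_0(\|\Delta\phi\|_{W^{k-2,p}}+\|\phi\|_{W^{k-2,p}})$ and iterates---whereas you spell out the partition-of-unity localization, the first-order commutators $[\Delta,\rho_i]$, and the parity bookkeeping; these are exactly the details hidden in the paper's one-line claim, so the two arguments are substantively the same.
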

\begin{proof}
It is easy to see that there exists a constant $c>0$ such that
$$
||\phi||_{\Delta^{k,p}(N,E)}\leq c||\phi||_{W^{k,p}(N,E)},\quad \phi\in \Gamma(N,E),
$$
since locally $\nabla=d+A$ where $d$ is the trivial connection and $A$ is a matrix of 1-forms whose entries are Christoffel symbols. The converse can be shown due to previous theorems. An immediate consequence of the Bochner-Weitzenb\"och formula and the Calderon-Zygmund inequality is that there exists $c_0>0$ satisfying
$$
||\phi||_{W^{k,p}(N,E)}\leq c_0\big(||\Delta \phi||_{W^{k-2,p}(N,E)}+||\phi||_{W^{k-2,p}(N,E)}\big).
$$
Therefore there exist constants $c_0,c_1,\cdots, C>0$ satisfying
\bean
||\phi||_{W^{k,p}}&\leq c_0\big(||\Delta \phi||_{W^{k-2,p}}+||\phi||_{W^{k-2,p}}\big)\\
&\leq c_0\bigr(c_1(||\Delta\Delta \phi||_{W^{k-4,p}}+||\Delta \phi||_{W^{k-4,p}})+c_2(||\Delta \phi||_{W^{k-4,p}}+||\phi||_{W^{k-4,p}})\bigr)\\
&\qquad\vdots\\
&\leq C||\phi||_{\Delta^{k,p}}.
\eea
\end{proof}

\begin{Rmk}
There is an alternative way to prove the preceding corollary. It can be proved that the ${\Delta^{k,p}}$-norm and the norm $||\cdot||_{{\nabla,k,p}}$ defined by for $\phi\in\Gamma(N,E)$,
$$
||\phi||_{{\nabla,k,p}}:=||\phi||_{L^2(N,E)}+||\nabla\phi||_{L^2(N,T^*N\otimes E)}+\cdots+||\nabla^k\phi||_{L^2(N,T^*N^{\otimes k}\otimes E)}
$$
are equivalent by examining the commutator
$$
\nabla^*\nabla\nabla-\nabla\nabla^*\nabla:\Gamma(N,T^*N^{\otimes j}\otimes E)\to \Gamma(N,T^*N^{\otimes j+1}\otimes E),\quad j\in\N.
$$
An advantage of this approach is that the preceding corollary can be proved even for noncompact complete manifolds whose curvature tensors and their covariant derivatives are bounded. See Theorem 1.3 in \cite{Do} (or section 2 in \cite{Sa}) but a wrong identity was used in the proof of \cite{Do}; later on, it was repaired by \cite{Sa}.
\end{Rmk}

\section{Fractal scale structures on mapping spaces}
The objective of this section is to explore the geography of fractal scale structures on a scale Hilbert space $\XX(N,E)$ which consists of
$$
\big(\Gamma^{k+k_0,2}(N,E),\langle\cdot,\cdot\rangle_{W^{k+k_0,2}}\big)\supset \big(\Gamma^{k+k_0+1,2}(N,E),\langle\cdot,\cdot\rangle_{W^{k+k_0+1,2}}\big)\supset\cdots\supset \Gamma(N,E)
$$
where $k_0$ is the smallest natural number satisfying $2k_0>n=\dim N$.

\begin{Thm}\label{thm:fractal str on mapping spaces}
A scale Hilbert space $\XX(N,E)$ is fractal. More precisely, it is scale isomorphic to $\ell^{2,f}$ for $f(\mu)=\lambda_\mu$, $\mu\in\N$ where $\{\lambda_\mu\}_{\mu\in\N}$ is the spectrum of the Bochner Laplacian on $E$.
\end{Thm}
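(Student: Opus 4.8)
The plan is to diagonalize every norm in sight by means of the spectral resolution of the Bochner Laplacian and to read off the weight sequence that defines the fractal model. First I would fix a discrete spectral resolution $\{\phi_\mu,\lambda_\mu\}_{\mu\in\N}$ of $\Delta=\nabla^*\nabla$ furnished by Theorem \ref{thm:spectral resolution}. Since $\Delta\geq 0$ and $\lambda_\mu\sim C\mu^{2/n}\to\infty$, replacing $\Delta$ by $\Delta+\Id$ if necessary — which changes neither the $W^{m,2}$-topologies (hence not the scale structure of $\XX(N,E)$) nor, by the asymptotics, the relevant class in $\FF$ — we may assume $0<\lambda_1\leq\lambda_2\leq\cdots$. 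Each $\phi_\mu$ is smooth by elliptic regularity, so $\phi_\mu\in\Gamma(N,E)=H_\infty$, and for smooth $u$ the $L^2$-expansion $u=\sum_\mu c_\mu(u)\phi_\mu$, $c_\mu(u)=\langle u,\phi_\mu\rangle_{L^2}$, converges in $C^\infty$, the coefficients $c_\mu(u)$ decay rapidly in $\mu$, and $\Delta$ acts termwise.

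Next I would compute the $\Delta^{m,2}$-norm on the dense subspace $\Gamma(N,E)$. Using $\Delta^j\phi_\mu=\lambda_\mu^j\phi_\mu$, the identity $\nabla^*\nabla=\Delta$, and orthonormality of $\{\phi_\mu\}$, one gets $\|\Delta^j u\|_{L^2}^2=\sum_\mu\lambda_\mu^{2j}c_\mu(u)^2$ and $\|\nabla\Delta^j u\|_{L^2}^2=\langle\Delta^j u,\Delta^{j+1}u\rangle_{L^2}=\sum_\mu\lambda_\mu^{2j+1}c_\mu(u)^2$, so that summing the terms of the $\Delta^{m,2}$-product yields
\[
\|u\|_{\Delta^{m,2}(N,E)}^2=\sum_{\mu\in\N}\Big(\sum_{s=0}^{m}\lambda_\mu^{\,s}\Big)c_\mu(u)^2 .
\]
Since $\lambda_\mu\geq\lambda_1>0$ and $\lambda_\mu\to\infty$, the weight $\sum_{s=0}^m\lambda_\mu^s$ is comparable to $\lambda_\mu^m$ uniformly in $\mu$ with constants depending only on $m$ and $\lambda_1$; combining this with Corollary \ref{cor:equivalence} (equivalence of the $W^{m,2}$- and $\Delta^{m,2}$-norms) gives, for all $u\in\Gamma(N,E)$ and all $m$,
\[
\|u\|_{W^{m,2}(N,E)}^2\;\asymp\;\sum_{\mu\in\N}\lambda_\mu^{\,m}\,c_\mu(u)^2 .
\]

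Finally, with $f(\mu):=\lambda_\mu\in\widetilde\FF$, define $T\colon\Gamma(N,E)\to\ell^{2,f}$ by $Tu:=\big(\lambda_\mu^{k_0/2}c_\mu(u)\big)_{\mu\in\N}$. This maps $\Gamma(N,E)$ bijectively onto the space of rapidly decreasing sequences, which is dense in every $\ell^2_{f^k}$. Because the $k$-th level of $\XX(N,E)$ is $\Gamma^{k+k_0,2}(N,E)$, the displayed equivalence with $m=k+k_0$ reads
\[
\|u\|_{W^{k+k_0,2}(N,E)}^2\;\asymp\;\sum_{\mu}\lambda_\mu^{\,k+k_0}c_\mu(u)^2=\sum_{\mu}\lambda_\mu^{\,k}\big(\lambda_\mu^{k_0/2}c_\mu(u)\big)^2=\|Tu\|_{\ell^2_{f^k}}^2 ,
\]
so $T$ and $T^{-1}$ are bounded on each level and extend, level by level, to a scale isomorphism $\XX(N,E)\stackrel{sc}{\cong}\ell^{2,f}$; in particular $\XX(N,E)$ is fractal.

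The one genuinely delicate point is the second paragraph: the infinite-sum manipulations behind the closed formula for $\|u\|_{\Delta^{m,2}}^2$ must be justified, which is why I would carry out that computation on the dense smooth core $\Gamma(N,E)=H_\infty$ — where $C^\infty$-convergence and the rapid decay of the coefficients make every step literal — and only then pass to the completions, equivalence of norms being stable under completion. Everything else is bookkeeping; the one small device worth isolating is the weight shift by $\lambda_\mu^{k_0/2}$ in the definition of $T$, which exactly compensates for the scale on $\XX(N,E)$ starting at Sobolev level $k_0$ rather than $0$ and thereby produces $\ell^{2,f}$ on the nose rather than merely a scale Hilbert space with the same class in $\FF$.
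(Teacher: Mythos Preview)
Your proposal is correct and follows essentially the same route as the paper: diagonalize the $\Delta^{m,2}$-norms via the spectral resolution $\{\phi_\mu,\lambda_\mu\}$, invoke Corollary~\ref{cor:equivalence} to pass to the $W^{m,2}$-norms, and define the scale isomorphism by sending a section to its Fourier coefficients weighted by $\lambda_\mu^{k_0/2}$ (the paper uses the equivalent weight $\sqrt{\sum_{j=0}^{k_0}\lambda_\mu^j}$). Your explicit shift $\Delta\rightsquigarrow\Delta+\Id$ to guarantee $\lambda_\mu>0$ and your care in restricting the computation to the smooth core are minor technical precautions the paper leaves implicit, but the argument is otherwise the same.
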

\begin{proof}
According to Theorem \ref{thm:spectral resolution}, $\{\phi_\mu\}_{\mu\in\N}$ eigensections of the Bochner Laplacian $\Delta$  form an $L^2$-orthonormal basis for $\Gamma^2(N,E)$. Let $\lambda_\mu\in\R$ be an eigenvalue associated to $\phi_\mu$, $\mu\in\N$. We can take $\Delta^{k,2}$-product instead of $W^{k,2}$-product due to Corollary \ref{cor:equivalence}. It is easy to see that $\{\phi_\mu\}_{\mu\in\N}$ form a $\Delta^{k,2}$-orthogonal basis for $\Gamma^{k,2}(N,E)$, $k\in\N$ as well: For $i,\,j\in\N$, we compute
\bean
\langle \phi_i,\phi_j\rangle_{\Delta^{k,2}}&=\langle \phi_i,\phi_j\rangle_{L^2}+\langle \nabla \phi_i,\nabla \phi_j\rangle_{L^2}+\langle \Delta \phi_i,\Delta \phi_j\rangle_{L^2}+\langle \nabla\Delta \phi_i,\nabla\Delta \phi_j\rangle_{L^2}\\
&\quad+\cdots+\langle \nabla^{2(k/2-\lfloor k/2\rfloor)}\Delta^{\lfloor k/2\rfloor} \phi_i,\nabla^{2(k/2-\lfloor k/2\rfloor)}\Delta^{\lfloor k/2\rfloor}\phi_j\rangle_{L^2}\\
&=\langle \phi_i,\phi_j\rangle_{L^2}+\langle \Delta \phi_i,\phi_j\rangle_{L^2}+ \lambda_i\lambda_j\langle  \phi_i,\phi_j\rangle_{L^2}\\
&\quad+\cdots +(\lambda_i\lambda_j)^{\lfloor k/2\rfloor}\langle \Delta^{2(k/2-\lfloor k/2\rfloor)} \phi_i,\phi_j\rangle_{L^2}\\
&=(1+\lambda_i+\lambda_i^2+\cdots+\lambda_i^k)\langle \phi_i,\phi_j\rangle_{L^2}\\
&=(1+\lambda_i+\lambda_i^2+\cdots+\lambda_i^k)\delta_{ij}.
\eea
Let $f(\mu)=\lambda_\mu$, $\mu\in\N$ and consider the following map between two scale Hilbert spaces.
\bean
\Phi:\XX(E,N)&\pf\ell^{2,f}\\
\psi&\longmapsto \Big(\cdots,\frac{1}{\sqrt{\sum_{j=0}^{k_0}\lambda_\mu^j}}(\psi,\phi_\mu)_{\Delta^{k_0,2}},\cdots\Big)
\eea
Then the map $\Phi$ is a scale isomorphism since for $\lambda_\mu\geq 1$,
$$
\lambda_\mu^{k_0}\leq1+\lambda_\mu+\lambda_\mu^2+\cdots+\lambda_\mu^{k_0}\leq (1+k_0)\lambda_\mu^{k_0}.
$$
\end{proof}

\begin{Rmk}
In the case of $\Map(S^1,\R)$ whose levels are
$$
\big(L^2(S^1,\R),\langle\cdot,\cdot\rangle_{L^2}\big)\supset \big(W^{1,2}(S^1,\R),\langle\cdot,\cdot\rangle_{W^{1,2}}\big)\supset\cdots\supset C^\infty(S^1,\R),
$$
the usual Fourier basis forms a $W^{k,2}$-orthogonal basis for $W^{k,2}(S^1,\R)$, $k\in\N$ as well as an $L^2$-orthonormal basis for $L^2(S^1,\R)$. The previous theorem together with Theorem \ref{thm:spectral resolution}  yield that $\Map(S^1,\R)$ is scale isomorphic the $\ell^{2,f}$ for $f(\mu)=\mu^2$, $\mu\in\N$.
This also can be shown by a straightforward computation with the Fourier basis as well.
\end{Rmk}

We have not introduced the notion of differential of functions or maps  in the scale-world since it is not our main concern; and we refer to \cite{HWZ1}. The following is a useful criterion for scale-smoothness.

\begin{Thm}\cite{HWZ1}\label{thm:sc-smooth}
Let $\HH$ and $\HH'$ be scale Hilbert spaces and $\VV$ be an open subset of $\HH$. Assume that a map $T:\VV\to \HH'$ is scale continuous and $T|_{V_{m+k}}:V_{m+k}\to H'_m$ is of class $C^{k+1}$ for all $m,k\geq 0$. Then $T$ is scale smooth.
\end{Thm}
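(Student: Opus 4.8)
The plan is to prove that $T$ is $\mathrm{sc}^{n}$ for every $n\ge 0$, which is the definition of scale smoothness. The guiding idea is to build, by induction on $n$, an explicit candidate $\widehat{T^{\,n}T}$ for the $n$-fold tangent map out of the ordinary Banach-space (Fréchet) derivatives $D^{j}(T|_{V_{\ell}})$, $j\le n$ — these exist on the levels that occur precisely because of the hypothesis — to show that each such candidate is scale continuous, and then to run a routine induction identifying $\widehat{T^{\,n}T}$ with the genuine iterated tangent map and concluding $\mathrm{sc}^{\infty}$. The entire difficulty is bookkeeping of levels; the calculus is classical.

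First the base case, that $T$ is $\mathrm{sc}^{1}$. For $x\in V_{1}$ set $DT(x):=D(T|_{V_{0}})(x)\in\mathcal{L}(H_{0},H'_{0})$, which makes sense because $T|_{V_{0}}\colon V_{0}\to H'_{0}$ is $C^{1}$ (the case $m=k=0$ of the hypothesis). Since the scale inclusions are continuous we have $\|h\|_{H_{0}}\le c\,\|h\|_{H_{1}}$, so ordinary differentiability of $T|_{V_{0}}$ at $x$ gives at once the defining scale-differentiability estimate for $DT(x)$. Next one checks the tangent map $TT(x,h)=(T(x),DT(x)h)$ is $\mathrm{sc}^{0}$: on level $k$ its first coordinate $x\mapsto T(x)\colon V_{k+1}\to H'_{k+1}$ is continuous by scale continuity of $T$; its second coordinate $(x,h)\mapsto DT(x)h\colon V_{k+1}\times H_{k}\to H'_{k}$ is continuous because, by uniqueness of derivatives, $DT(x)|_{H_{k}}=D(T|_{V_{k}})(x)$, and $T|_{V_{k}}\colon V_{k}\to H'_{k}$ being $C^{1}$ makes $x\mapsto DT(x)$ continuous into $\mathcal{L}(H_{k},H'_{k})$; composing with the continuous bilinear evaluation finishes it. Hence $T$ is $\mathrm{sc}^{1}$.

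For the inductive step one writes down $\widehat{T^{\,n}T}$ explicitly: on level $\ell$ it is a tuple whose entries are multilinear expressions $(x,v_{i_{1}},\dots,v_{i_{j}})\mapsto D^{j}T(x)(v_{i_{1}},\dots,v_{i_{j}})$ with $0\le j\le n$, where $x$ ranges over $V_{\ell+n}$ and each tangent direction $v$ over one of the levels $H_{\ell},\dots,H_{\ell+n-1}$. The decisive structural fact, inherited from the chain and Leibniz rules, is that an entry built from $D^{j}T(x)$ which is required to land in $H'_{p}$ has all of its $j$ selected directions sitting at level $\ge p+j-1$ (and $x$ at an even higher level). The hypothesis then says exactly that $T|_{V_{p+j-1}}\colon V_{p+j-1}\to H'_{p}$ is of class $C^{(p+j-1-p)+1}=C^{j}$, which is precisely what is needed for $D^{j}T(x)$ to exist as a bounded $j$-linear map into $H'_{p}$ depending continuously on $x$; so every entry of $\widehat{T^{\,n}T}$ is scale continuous, i.e. $\widehat{T^{\,n}T}$ is $\mathrm{sc}^{0}$ for all $n$. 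Dropping one $H'$-level upgrades $C^{j}$ to $C^{j+1}$, so after a shift by one level each entry is of class $C^{1}$; combining these two observations, the induction closes ($T$, $TT$, $T^{2}T$, $\dots$ are all $\mathrm{sc}^{1}$, with the constructed tangents serving as their derivatives), and therefore $T$ is $\mathrm{sc}^{n}$ for every $n$, i.e. scale smooth.

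The analytic ingredients — chain and Leibniz rules for Fréchet derivatives, continuity of multilinear evaluation, uniqueness of derivatives, compatibility of the classical derivatives on different levels, and continuity of the scale embeddings — are all routine. The genuine work, and the only place requiring care, is the combinatorics of the iterated tangent construction: tracking which $H$- and $H'$-levels the base point, each individual tangent direction, and each term $D^{j}T(x)(\dots)$ occupy in $\widehat{T^{\,n}T}$, and checking uniformly that differentiating $j$ times with a net downward shift of $j-1$ levels still leaves one order of classical regularity. This is exactly why the hypothesis is stated with $C^{k+1}$ rather than $C^{k}$: a single lost level in the bookkeeping would make the $j$-th order terms fail to even be defined, and it is the extra derivative that furthermore supplies the $C^{1}$-estimates (after one more dropped level) needed for the $\mathrm{sc}^{1}$-promotion at each stage. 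I expect this level-accounting to be the main obstacle; everything else is mechanical.
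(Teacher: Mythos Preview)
The paper does not give its own proof of this theorem; it is quoted verbatim from \cite{HWZ1} and used as a black box (to verify in Proposition~\ref{prop:mapping space is sc-manifold} that the transition maps of $\Map(N,M)$ are scale smooth). So there is no proof in the paper to compare your attempt against.

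Your outline is the standard route taken in the HWZ polyfold literature: establish $\mathrm{sc}^{1}$ directly, then show that the tangent map $TT$ again satisfies the hypotheses of the theorem, and iterate. The base case you wrote is correct. For the inductive step, your heuristic ``an entry built from $D^{j}T(x)$ landing in $H'_{p}$ has its $j$ directions at level $\ge p+j-1$'' is the right combinatorial target, but your write-up stops short of actually verifying it, and this is precisely where the argument is nontrivial. In particular, when you differentiate $(x,h)\mapsto DT(x)h$ in the $x$-variable with $x\in V_{m+k+1}$ and $h\in H_{m+k}$, the naive computation produces $D^{2}T(x)(h,\xi)$; one must explain why this is a bounded bilinear form accepting $h$ from the \emph{lower} level $H_{m+k}$ and $\xi$ from $H_{m+k+1}$, not merely both from $H_{m+k+1}$. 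This uses that $D(T|_{V_{m+k}})$ and $D(T|_{V_{m+k+1}})$ agree on the overlap and that the former already absorbs $h\in H_{m+k}$, while the extra level on $x$ supplies the additional derivative. You correctly flag this bookkeeping as ``the main obstacle'', but you have not carried it out; a complete proof requires an explicit inductive description of the level pattern in $T^{n}\HH$ (the components of $(T^{n}\HH)_{\ell}$ are $H_{\ell},H_{\ell+1},\dots,H_{\ell+n}$ with binomial multiplicities) together with the matching verification for each entry of $T^{n}T$. Until that is written down, what you have is a correct strategy rather than a proof.
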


\begin{Prop}\label{prop:mapping space is sc-manifold}
The mapping space $\Map(N,M)$ is a scale Hilbert manifold with local charts on $\XX(N,u^*TM)$ for $u\in C^\infty(N,M)$.
\end{Prop}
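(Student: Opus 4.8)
The plan is to fix auxiliary Riemannian metrics on $N$ and $M$ once and for all and to build the atlas out of exponential-map charts centered at smooth maps. First I would verify that each $\XX(N,u^*TM)$, $u\in C^\infty(N,M)$, is a bona fide scale Hilbert space: its levels $\Gamma^{k+k_0,2}(N,u^*TM)$ form a nested sequence of Hilbert spaces with intersection $\Gamma(N,u^*TM)$; axiom (i) is the Rellich--Kondrachov compactness of $W^{k+1,2}\hookrightarrow W^{k,2}$ on the closed manifold $N$; and axiom (ii) is the density of smooth sections in each Sobolev completion.

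Next, the charts. For $u\in C^\infty(N,M)$ choose $\rho_u>0$ smaller than the injectivity radius of $M$ along the compact set $u(N)$ and put $\VV_u:=\{\xi\in\Gamma^{k_0,2}(N,u^*TM):\|\xi\|_{C^0}<\rho_u\}$, an open subset of $\Gamma^{k_0,2}$ since $2k_0>n$ makes the embedding $W^{k_0,2}\hookrightarrow C^0$ bounded. Define $\Psi_u(\xi)(x):=\exp^M_{u(x)}\big(\xi(x)\big)$. I would then establish: (a) for every $k$, $\Psi_u$ restricts to a bijection of $\VV_u\cap\Gamma^{k+k_0,2}(N,u^*TM)$ onto $\{w\in W^{k+k_0,2}(N,M):d_{C^0}(w,u)<\rho_u\}$, with inverse $w\mapsto\big(x\mapsto(\exp^M_{u(x)})^{-1}(w(x))\big)$; (b) $\Psi_u$ and $\Psi_u^{-1}$ are continuous for the $\Gamma^{k_0,2}$- and $W^{k_0,2}$-topologies; (c) every $w\in W^{k_0,2}(N,M)$ belongs to some $\Psi_u(\VV_u)$, since $2k_0>n$ makes $w$ continuous, hence uniformly approximable by smooth maps, so one picks $u\in C^\infty(N,M)$ with $d_{C^0}(w,u)<\rho_u$. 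Parts (a) and (b) are instances of the regularity and continuity of the superposition operator attached to the smooth fibre map $(x,v)\mapsto\big(x,\exp^M_{u(x)}v\big)$; one proves them by passing to coordinate charts and bundle trivializations, using a partition of unity, and reducing to the Section~2 proposition on $\bar\varphi_{k,p}$ via the chain and product rules (this is where $kp>n$ is needed).

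For $u,v\in C^\infty(N,M)$ the transition map $\Psi_v^{-1}\circ\Psi_u$ is, on its natural domain, the superposition operator $\xi\mapsto\big(x\mapsto(\exp^M_{v(x)})^{-1}(\exp^M_{u(x)}(\xi(x)))\big)$, i.e.\ post-composition of a section of $u^*TM$ with the smooth, fibre-preserving map between open subsets of $u^*TM$ and $v^*TM$ that is defined wherever $u(x)$ and $v(x)$ are close. By Theorem \ref{thm:sc-smooth} it suffices to check that this operator carries $\Gamma^{m+k+k_0,2}$ into $\Gamma^{m+k_0,2}$ with regularity $C^{k+1}$ for all $m,k\ge0$; this is the sc-smoothness of Nemytskii operators on Sobolev sections, again obtained by localization and from the differentiability of $\bar\varphi_{k,p}$ with its precise loss-of-derivative count, cf.\ \cite[Appendix~B]{MS} and \cite{HWZ1}. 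Swapping $u$ and $v$ provides a two-sided inverse, so every transition map is a scale diffeomorphism and $\{\Psi_u^{-1}\}_{u\in C^\infty(N,M)}$ is a scale atlas modeled on the spaces $\XX(N,u^*TM)$; the same transition estimate shows the structure is independent of the chosen metrics.

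I expect the main obstacle to be this last step together with the forward half of (a): fitting the derivative bookkeeping demanded by Theorem \ref{thm:sc-smooth} to the loss-of-regularity count that the composition lemma for Sobolev sections actually yields, uniformly over the atlas. The rest is standard Sobolev theory and routine diagram chasing.
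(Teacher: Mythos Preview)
Your proposal is correct and follows essentially the same route as the paper: exponential-map charts centered at smooth maps $u\in C^\infty(N,M)$, covering by density of smooth maps, and the sc-smoothness criterion of Theorem~\ref{thm:sc-smooth} applied to the transition maps $\Psi_v^{-1}\circ\Psi_u$. The additional verification you propose (that $\XX(N,u^*TM)$ satisfies the scale axioms via Rellich--Kondrachov and density) is sound and is simply left implicit in the paper.

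The one place where you make life harder than necessary is the step you flag as the ``main obstacle.'' There is in fact no loss-of-derivative bookkeeping to do. The paper simply invokes Eliasson's theorem \cite{El}: for each fixed level $k$, the transition map is already $C^\infty$ as a map $\Gamma^{k+k_0,2}\to\Gamma^{k+k_0,2}$ (same regularity in and out). Hence its restriction $\Gamma^{m+k+k_0,2}\to\Gamma^{m+k+k_0,2}$ is $C^\infty$, and composing with the continuous linear inclusion into $\Gamma^{m+k_0,2}$ keeps it $C^\infty$, which is certainly $C^{k+1}$. So the hypothesis of Theorem~\ref{thm:sc-smooth} is satisfied automatically, with no need to track a regularity loss against the index $k$. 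Your Nemytskii-operator argument would reprove (part of) Eliasson's result from scratch; it works, but the shortcut is to quote it.
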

\begin{proof}
The proof immediately follows from the Eliasson's work \cite{El} together with the previous theorem. We first pick a smooth map $u\in C^\infty(N,M)$; since $C^\infty(N,M)$ is dense in $W^{k,2}(N,M)$ for all $k\in\N$, it suffices to find open covering charts near smooth maps. We denote by the bundle map $\tilde{u}:u^*TM\to TM$ induced by $u:N\to M$. Let $D_\epsilon TM$ be the $\epsilon$-disk subbundle of $TM$. There exists a small $\epsilon>0$ such that the exponential map $\exp_{|D_\epsilon T_pM}$ is a diffeomorphism onto an open neighborhood of $p\in M$. Then we have the following parametrization:
\bean
\exp_u^k:\Gamma^{k+k_0,2}(N,u^*D_\epsilon TM)&\pf W^{k+k_0,2}(N,M)\\
\phi(\cdot)&\longmapsto \exp\big(\tilde{u}(\phi(\cdot))\big)
\eea
It turns out that $W^{k+k_0,2}(N,M)$ is a Hilbert manifold and its differentiable structure is given by $\{\UU^k_u,(\exp_u^k)^{-1}\}_{u\in C^\infty(N,M)}$ where $\UU_u^k:=\exp_u^k(\Gamma^{k+k_0,2}(N,u^*D_\epsilon TM))$, see \cite{El,Kl}. In order to prove that $\Map(N,M)$ is a scale Hilbert manifold, we take a close look at the following map.
$$
\mathrm{Exp}_{uu'}:\VV\pf \XX(N,u'^*TM)
$$
where $\VV$ is an open subset in $\XX(N,u'^*TM)$ given by  $\VV_k:=(\exp_u^{k})^{-1}(\UU^k_u\cap\UU^k_{u'})$ and
$$
\mathrm{Exp}_{uu'}|_{V_k}:=(\exp_{u'}^k)^{-1}\circ\exp^k_u:(\exp_u^k)^{-1}(\UU^k_u\cap\UU^k_{u'})\pf (\exp^k_{u'})^{-1}(\mathscr{U}^k_u\cap\UU^k_{u'}).
$$
Due to \cite{El}, we know that each $\mathrm{Exp}_{uu'}|_{V_k}$ for all $k\in\N$ is of class $C^\infty$ and $\mathrm{Exp}_{uu'}$ is obviously scale continuous. Thus, applying Theorem \ref{thm:sc-smooth}, we prove that $\mathrm{Exp}_{uu'}$ is scale smooth for all $u,u'\in C^\infty(N,M)$ and hence the proposition.
\end{proof}

\noindent\textbf{Proof of Theorem A.} According to Theorem \ref{thm:fractal str on mapping spaces}, $\XX(N,E)$ is scale isomorphic to $\ell^{2,f}$ for $f(\mu)=\lambda_\mu$, $\mu\in\N$. As $\mu\to\infty$, $\lambda_\mu$ is asymptotically converge to $\mu^{2/n}$ due to Theorem \ref{thm:spectral resolution}. Therefore $\XX(N,E)$ is scale isomorphic to $\ell^{2,f}$ for $f(\mu)=\mu^{2/n}$, $\mu\in\N$.
\hfill$\square$\\[-1.5ex]

\noindent\textbf{Proof of Corollary A.} The proof follows from Theorem A and Proposition \ref{prop:mapping space is sc-manifold} \hfill$\square$\\[-1.5ex]

In what follows, we shall define an operation on $\FF$ to study fractal scale structures of product mapping spaces. See the introduction for definitions of sets $\FF$ and $\widetilde\FF$. The $*$-operation on $\widetilde\FF$ is defined to be for $f,h\in\widetilde\FF$,
\bea
f*h(1)&=\min\{f(\mu),h(\mu)\,|\,\mu\in\N\}\\
f*h(2)&=\min\big\{\{f(\mu),h(\mu)\,|\,\mu\in\N\}\setminus\{f*h(1)\}\big\}\\
\vdots\\
f*h(i)&=\min\big\{\{f(\mu),h(\mu)\,|\,\mu\in\N\}\setminus\{f*h(1),\dots,f*h(i-1)\}\big\}\\
\vdots
\eea
\begin{Lemma}\label{lemma:equiv}
There exists a constant $c>0$ such that for $f,f',h\in\widetilde\FF$ if $f\sim f'$,
$$
\frac{1}{c}f'*h(\mu)\leq f*h(\mu)\leq cf'*h(\mu)\quad \textrm{for all }\;\mu\in\N.
$$
\end{Lemma}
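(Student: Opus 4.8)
The plan is to encode the $*$-operation by counting (distribution) functions, which linearizes it and makes the equivalence transparent. For $g\in\widetilde\FF$ and $t>0$ set
$$
N_g(t):=\#\{\mu\in\N\,|\,g(\mu)\leq t\},
$$
which is finite since $g$ is monotone and unbounded, non-decreasing in $t$, and tends to $\infty$ as $t\to\infty$. The content of the definition of $*$ is that $f*h$ is the increasing rearrangement of the disjoint union of the value multisets of $f$ and $h$; equivalently $N_{f*h}=N_f+N_h$, and $f*h$ is the generalized inverse of $N_{f*h}$, in the sense that $N_{f*h}(t)\geq\mu$ implies $f*h(\mu)\leq t$ while $N_{f*h}\bigl(f*h(\mu)\bigr)\geq\mu$ (the $\mu$ smallest entries of the combined multiset all lie $\leq f*h(\mu)$). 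One should also note at the outset that $f*h\in\widetilde\FF$, so that the statement makes sense: it is non-decreasing by construction, and unbounded because only finitely many entries of the combined multiset lie below any given bound.

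First I would turn $f\sim f'$ into an inequality of counting functions. Necessarily $c\geq1$ (else $\frac{1}{c}f\leq cf$ fails), and from $\frac{1}{c}f(\mu)\leq f'(\mu)\leq cf(\mu)$ one gets, for every $t>0$,
$$
N_f(t/c)\leq N_{f'}(t)\leq N_f(ct)
$$
(if $f(\mu)\leq t/c$ then $f'(\mu)\leq cf(\mu)\leq t$; if $f'(\mu)\leq t/c$ then $f(\mu)\leq cf'(\mu)\leq t$). Since $c\geq1$ we also have $N_h(t/c)\leq N_h(t)\leq N_h(ct)$; adding and using $N_{f*h}=N_f+N_h$ yields
$$
N_{f*h}(t/c)\leq N_{f'*h}(t)\leq N_{f*h}(ct)\qquad\text{for all }t>0.
$$

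Finally I would convert this back to the sequences. Evaluating the left inequality at $t=c\,f*h(\mu)$ together with $N_{f*h}\bigl(f*h(\mu)\bigr)\geq\mu$ gives $N_{f'*h}\bigl(c\,f*h(\mu)\bigr)\geq\mu$, hence $f'*h(\mu)\leq c\,f*h(\mu)$; evaluating the right inequality at $t=f'*h(\mu)$ together with $N_{f'*h}\bigl(f'*h(\mu)\bigr)\geq\mu$ gives, symmetrically, $f*h(\mu)\leq c\,f'*h(\mu)$. These are exactly the asserted bounds, with the very constant $c$ witnessing $f\sim f'$; in particular $[f*h]$ depends only on $[f]$ and $[h]$, so $*$ descends to an operation on $\FF$.

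I do not expect a real obstacle here. The only points that are not purely formal are the bookkeeping with multiplicities in the disjoint union — which evaporates once one argues with $N_g$ rather than with the sorted list directly — and the routine passage between a monotone function and its generalized inverse, in particular the inequality $N_{f*h}\bigl(f*h(\mu)\bigr)\geq\mu$, which is the one place where the definition of $f*h$ as "sort the disjoint union" is actually used.
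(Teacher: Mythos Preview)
Your proof is correct and rather cleaner than the paper's. The paper argues directly and by contradiction: it writes the first $\eta$ entries of $f*h$ as $\{f(1),\dots,f(r),h(1),\dots,h(\eta-r)\}$ and those of $f'*h$ with an index $s$ in place of $r$, assumes $f*h(\eta)>c\,f'*h(\eta)$, and then splits into the two cases $f*h(\eta)=f(r)$ and $f*h(\eta)=h(\eta-r)$, in each case comparing $r$ with $s$ and chasing inequalities to a contradiction. Your counting-function reformulation linearizes the $*$-operation via $N_{f*h}=N_f+N_h$ and reduces the lemma to the monotonicity of $N_h$ together with the obvious translation of $f\sim f'$ into $N_f(t/c)\leq N_{f'}(t)\leq N_f(ct)$; this avoids the case analysis entirely and makes the result (with the very same constant $c$) essentially automatic. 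The paper's route is more hands-on and requires no auxiliary notion, while yours explains conceptually \emph{why} the lemma holds and would generalize more readily (e.g.\ to iterated products or to comparisons with other merge-type operations).
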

\begin{proof}
Since $f\sim f'$, there exists $c>0$ such that
$$
\frac{1}{c}f'(\mu)\leq f(\mu)\leq cf'(\mu),\quad \mu\in\N.
$$
We will show that the assertion holds for this $c>0$. Assume on the contrary that $f*h(\eta)>cf'*h(\eta)$ for some $\eta\in\N$. The only nontrivial case is as follows: $r,\, s\in\N$,
$$
\{f*h(1),\cdots,f*h(\eta)\}=\{f(1),\cdots,f(r),h(1),\cdots h(\eta-r)\},
$$
$$
\{f'*h(1),\cdots,f'*h(\eta)\}=\{f'(1),\cdots,f'(s),h(1),\cdots h(\eta-s)\}\\[0.5ex].
$$
If $f*h(\eta)=f(r)$, by assumption, $f(r)>cf'*h(\eta)\geq cf'(s)$ and thus $r>s$. This implies that $\eta-r<\eta-s$ and $h(\eta-r)\leq h(\eta-s)$. But then $f(r)\leq h(\eta-s)$ and this leads to a contradiction
$$
f*h(\eta)=f(r)\leq h(\eta-s)\leq f'*h(\eta).
$$

Suppose that $f*h(\eta)=h(\eta-r)$, then by assumption, $h(\eta-r)>cf'*h(\eta)\geq ch(\eta-s)$. Thus $r<s$ and $f(s)\geq h(\eta-r)$. But then we get
$$
f*h(\eta)=h(\eta-r)\leq f(s)\leq cf'(s)\leq cf'*h(\eta)
$$
which also contradicts to our assumption. Thus we have proved that $f*h(\mu)\leq cf'*h(\mu)$ for all $\mu\in\N$. In a similar way, one can prove $f*h(\mu)>\frac{1}{c}f'*h(\mu)$ for all $\mu\in\N$ and this completes the proof.
\end{proof}

This lemma yields that the $*$-operation descends to $\FF$: $[f]*[h]:=[f*h]$. This product operation is commutative and associative. We  endow a partial order on $\FF$ as follows: $[f_1]\leq[f_2]$ if there is $c>0$ such that $f_1(\mu)\leq cf_2(\mu)$ for all $\mu\in\N$. Then $*$-operation preserves this partial order, i.e. $[f_1]*[h]\leq [f_2]*[h]$ if $[f_1]\leq [f_2]$. Moreover it holds that $[f_1]*[h]\leq [h], [f_1]$ interestingly. If we allow $\FF$ to include an element $e(\mu)=\infty$ for all $\mu\in\N$, then $(\FF,*)$ becomes a partially ordered commutative monoid with the identity element $e$.

\begin{Prop}\label{prop:idempotent}
An element $[f]\in\FF$ which can be represented as a polynomial is an idempotent element with respect to the $*$-operation.
\end{Prop}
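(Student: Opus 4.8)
The plan is to compute $f*f$ explicitly and then recognise that $f*f\sim f$ via the doubling property enjoyed by every polynomial. Since idempotency of $[f]$ depends only on the class and the $*$-operation descends to $\FF$ (Lemma~\ref{lemma:equiv}), I may and will replace $f$ by a polynomial representative. As $f\in\widetilde\FF$ is monotone and unbounded, this polynomial has degree $d\geq 1$ and positive leading coefficient $a_d$, and $f(\mu)>0$ for all $\mu\in\N$.

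First I would unwind the $*$-operation with $h=f$. The data entering the definition is the multiset of values $\{f(\mu)\,:\,\mu\in\N\}$ with every value counted twice; listing it in increasing order (using monotonicity of $f$) gives $f*f(2j-1)=f*f(2j)=f(j)$ for $j\in\N$, equivalently $f*f(\mu)=f(\lceil \mu/2\rceil)$ for all $\mu\in\N$. In particular $f*f\in\widetilde\FF$, and monotonicity of $f$ immediately yields $f*f(\mu)=f(\lceil \mu/2\rceil)\leq f(\mu)$.

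It then remains to produce a constant $c>0$ with $f(\mu)\leq c\,f(\lceil\mu/2\rceil)$ for every $\mu\in\N$. Writing $f(\mu)=a_d\mu^d+\cdots$ with $a_d>0$, one has
$$
\lim_{\mu\to\infty}\frac{f(\mu)}{f(\lceil\mu/2\rceil)}=2^d<\infty,
$$
so this ratio is bounded on a tail $\{\mu\geq\mu_0\}$; on the finite complement $\{1,\dots,\mu_0-1\}$ it is bounded because $f$ is strictly positive, and $c$ may be taken to be the overall bound. Hence $\frac{1}{c}\,f*f(\mu)\leq f(\mu)\leq c\,f*f(\mu)$ for all $\mu$, so $f\sim f*f$ and therefore $[f]*[f]=[f*f]=[f]$, i.e.\ $[f]$ is idempotent.

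The argument has no genuinely hard step. The only points requiring care are the bookkeeping in the second paragraph — the $*$-operation must be read on the multiset of values of $f$, so that values attained by $f$ more than once are correctly accounted for — and the verification that the doubling estimate of the last paragraph is uniform, including for small arguments; both are routine once the leading term of the polynomial $f$ is isolated.
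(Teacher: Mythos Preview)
Your proof is correct and follows essentially the same route as the paper: both compute $f*f(\mu)=f(\lceil\mu/2\rceil)$ (the paper writes this as $f(\lfloor(\mu-1)/2\rfloor+1)$, which is the same thing) and then invoke the doubling property of polynomials. The only cosmetic difference is that the paper first reduces to the monomial representative $f(\mu)=\mu^k$ and obtains the explicit constant $2^k$, whereas you keep the general polynomial and argue via the limit $f(\mu)/f(\lceil\mu/2\rceil)\to 2^d$ together with positivity on the finite initial segment; both arrive at $f*f\sim f$.
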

\begin{proof}
There is no loss of generality in assuming that $f(\mu)=\mu^k$. We note that
$$
f*f(\mu)=f\Big(\Big\lfloor\frac{\mu-1}{2}\Big\rfloor+1\Big)
$$
and thus we have
$$
\Big(\frac{\mu}{2}\Big)^k\leq f*f(\mu)\leq\Big(\frac{\mu+1}{2}\Big)^k.
$$
Since $\mu\in\N$, $(4\mu)^k\geq(\mu+1)^k$, and hence
$$
\Big(\frac{1}{2}\Big)^kf(\mu)=\Big(\frac{1}{2}\Big)^k\mu^k\leq f*f(\mu)\leq 2^k\mu^k=2^kf(\mu).
$$
This implies that $[f]*[f]=[f]$ in $\FF$ and thus the proposition is proved.
\end{proof}

\begin{Lemma}\label{lemma:product of fractal is fractal}
A product of fractal scale Hilbert spaces is fractal again.
\end{Lemma}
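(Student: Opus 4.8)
The plan is to identify the scale product of two fractal spaces explicitly in terms of the $*$-operation introduced above. Suppose $\HH$ and $\HH'$ are fractal, say $\HH\stackrel{sc}{\cong}\ell^{2,f}$ and $\HH'\stackrel{sc}{\cong}\ell^{2,g}$ with $f,g\in\widetilde\FF$. A pair of scale isomorphisms $\HH\to\ell^{2,f}$ and $\HH'\to\ell^{2,g}$ assembles levelwise into a scale isomorphism $\HH\oplus_{sc}\HH'\to\ell^{2,f}\oplus_{sc}\ell^{2,g}$, so it suffices to show that $\ell^{2,f}\oplus_{sc}\ell^{2,g}$ is fractal; the case of a finite product of fractal spaces then follows by induction on the number of factors. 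Concretely, the statement I would establish is $\ell^{2,f}\oplus_{sc}\ell^{2,g}\stackrel{sc}{\cong}\ell^{2,f*g}$, which is the expected outcome since the basis weights of $\ell^2_f\oplus\ell^2_g$ are precisely the values of $f$ and $g$ pooled together.

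The first step is to check that $f*g$ is a legitimate element of $\widetilde\FF$. Because $f$ and $g$ are monotone and unbounded, for every $t>0$ both $\{\mu\in\N:f(\mu)\le t\}$ and $\{\nu\in\N:g(\nu)\le t\}$ are finite initial segments of $\N$; hence only finitely many members of the pooled family $\{f(\mu)\}_{\mu\in\N}$ and $\{g(\nu)\}_{\nu\in\N}$ lie below any given bound, and this family therefore admits a nondecreasing enumeration. That enumeration is exactly the function $f*g$ defined recursively in the text: it is nondecreasing by construction and unbounded because $f$ already is, so $f*g\in\widetilde\FF$ and $\ell^{2,f*g}$ is a bona fide fractal scale Hilbert space.

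The heart of the argument is to realize the desired scale isomorphism as a single coordinate reindexing that is compatible with every level at once. I would index the standard orthogonal basis of $\ell^2_f\oplus\ell^2_g$ by $J:=\N\sqcup\N$, with the basis vector $(\mu,1)$ contributing squared level-$k$ norm $f(\mu)^k$ and $(\nu,2)$ contributing $g(\nu)^k$; set $w(\mu,1):=f(\mu)$ and $w(\nu,2):=g(\nu)$. By the previous paragraph there is a bijection $\beta:\N\to J$ with $w(\beta(j))=(f*g)(j)$ for all $j\in\N$. Define $\Phi:\ell^{2,f}\oplus_{sc}\ell^{2,g}\to\ell^{2,f*g}$ by reindexing coordinates along $\beta$, i.e.\ $(\Phi z)_j:=z_{\beta(j)}$. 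On level $k$ one then has $\sum_{j\in\N}(f*g)(j)^k(\Phi z)_j^2=\sum_{j'\in J}w(j')^k z_{j'}^2$, so the induced map on level $k$ is a linear isometry of $\ell^2_{f^k}\oplus\ell^2_{g^k}$ onto $\ell^2_{(f*g)^k}$; the reindexing along $\beta^{-1}$ is its inverse and is isometric on each level as well. Hence $\Phi$ is a scale isometry, a fortiori a scale isomorphism, and $\ell^{2,f}\oplus_{sc}\ell^{2,g}$ is fractal.

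I do not anticipate a genuine obstacle. The two places where a little care is needed are the well-definedness of $f*g$ (where one uses that a monotone unbounded function has finite sublevel sets) and the fact that the single permutation $\beta$ works on every scale level simultaneously — the latter being automatic because $\beta$ is chosen to match the weight functions $w$ and $f*g$ themselves rather than only their $k$-th powers, equivalently because $t\mapsto t^k$ is monotone so that $(f*g)^k=f^k*g^k$.
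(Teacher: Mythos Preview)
Your proposal is correct and follows essentially the same route as the paper: reduce to $\ell^{2,f}\oplus_{sc}\ell^{2,g}$, then exhibit the scale isomorphism to $\ell^{2,f*g}$ by reindexing coordinates according to the merged enumeration of the weights. The paper's proof is terser and omits the checks you spell out (that $f*g\in\widetilde\FF$ and that one permutation works on all levels because $(f*g)^k=f^k*g^k$), but the idea is identical.
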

\begin{proof}
It suffices to show that the product of $\ell^{2,f_1}$ and $\ell^{2,f_2}$ is scale isomorphic to $\ell^{2,h}$ for some monotone unbounded function $h:\N\to(0,\infty)$. An element in $(\ell^{2,f_1}\oplus_{sc}\ell^{2,f_2})_k=\ell_{f_1^k}^{2}\oplus\ell_{f_2^k}^{2}$ is a sequence $(x,y):\N\to\R\x\R$ such that
$$
\sum_{\mu=1}^\infty f_1^k(\mu)x_\mu^2+f_2^k(\mu)y_\mu^2<\infty,\,\quad x=(x_\mu)_{\mu\in\N},\,\,y=(y_\mu)_{\mu\in\N}.
$$
Then the following map is a scale isomorphism by definition of $*$-operation.
\bean
\ell^{2,f_1}\oplus_{sc}\ell^{2,f_2}&\pf\ell^{2,f_1*f_2}\\
(x,y)&\longmapsto z
\eea
where $z(\mu):=x(j)$ resp. $:=y(j)$ if $f_1*f_2(\mu)=f_1(j)$ resp. $f_2(j)$ for $j\in\N$.
\end{proof}

\noindent\textbf{Proof of Theorem B.}
Due to Theorem \ref{thm:fractal str on mapping spaces}, there exist a scale isomorphism
$$
\XX(N_1,E_1)\oplus_{sc}\XX(N_2,E_2)\stackrel{sc}{\cong}\ell^{2,f_1}\oplus_{sc}\ell^{2,f_2}.
$$
for $f^i(\mu)=\lambda^i_\mu$ where $\{\lambda^i_\mu\}_{\mu\in\N}$ is the spectrum of the Bochner Laplacians on $\Gamma(N_i,E_i)$ for $i\in\{1,2\}$.  Then Lemma \ref{lemma:product of fractal is fractal} yields that
$$
\XX(N_1,E_1)\oplus_{sc}\XX(N_2,E_2)\stackrel{sc}{\cong}\ell^{2,f_1}\oplus_{sc}\ell^{2,f_2}\stackrel{sc}{\cong}\ell^{2,f_1*f_2}.
$$
In addition, due to Theorem \ref{thm:spectral resolution}, $f_1(\mu)\sim\mu^{2/n_1}$ and $f_2(\mu)\sim\mu^{2/n_2}$ as $\mu\to\infty$  where $n_1=\dim N_1$ and $n_2=\dim N_2$. Let us assume that $n_1\leq n_2$, i.e. $[f_1]\geq[f_2]$. Since $[f_2]*[f_2]=[f_2]$ by Proposition \ref{prop:idempotent}, we have
$$
[f_2]=[f_2]*[f_2]\leq [f_1]*[f_2]\leq [f_2].
$$
This shows that $\ell^{2,f_1*f_2}$ and $\ell^{2,f_2}$ are scale isomorphic and hence the theorem is proved:
$$
\XX(N_1,E_1)\oplus_{sc}\XX(N_2,E_2)\stackrel{sc}{\cong}\ell^{2,f_1}\oplus_{sc}\ell^{2,f_2}\stackrel{sc}{\cong}\ell^{2,f_1*f_2}\stackrel{sc}{\cong}\ell^{2,f_2}\stackrel{sc}{\cong}\XX(N_2,E_2).
$$
\hfill$\square$

\section{Relative mapping spaces}
This section is devoted to study relative mapping spaces. Let $(N,\p N)$ be an n-dimensional compact manifold with nonempty boundary. In the presence of boundary, most part of spectral theory continues to work with nice boundary conditions. Here we consider the mixed boundary condition which generalizes both Dirichlet and Neumann boundary conditions. Let $L_p$'s be submanifolds in $M$ parametrized by $p\in\p N$ and denote by $L:=\bigsqcup_{\p N}L_p$. We are interested in relative mapping spaces of the following form.
$$
\Map\big((N,\p N),(M,L)\big)=\big\{\big(W_\p^{k+k_0,2},\langle\cdot\,,\,\cdot\rangle_{W^{k+k_0,2}}\big)\big\}_{k\in\N_0}
$$
where $k_0$ is the smallest natural number satisfying $2k_0> n$ as before and $W_\p^{k,2}$'s are Hilbert manifolds given by
$$
W_\p^{k,2}:=\big\{u\in W^{k,2}(N,M)\,\big|\,u(p)\in L_p,\,\p_\nu u(p)\in N_{u(p)}L_p,\,p\in\p N\big\}.
$$
Here $N_{u(p)}L_p$ is the normal bundle of $L_p\subset M$ at $u(p)$ and $\nu$ stands for the outward pointing unit normal vector field of $N$ at $\p N$. This scale Hilbert manifold is modeled on the following scale Hilbert space.
$$
\XX\big((N,\p N),(u^*TM,u^*TL)\big)=\big\{\big(\Gamma_\p^{k+k_0,2},\langle\cdot\,,\,\cdot\rangle_{W^{k+k_0,2}}\big)\big\}_{k\in\N_0}
$$
where Hilbert spaces $\Gamma_\p^{k,2}$'s are given by
$$
\Gamma_\p^{k,2}:=\big\{\phi\in \Gamma^{k,2}(N,u^*TM)\,\big|\,\phi(p)\in T_{u(p)}L_p,\,\p_\nu \phi(p)\in N_{u(p)}L_p,\,p\in\p N\big\}.
$$
As we mentioned, this kind of boundary condition is said to be the mixed boundary condition; $\phi$ satisfies the Dirichlet boundary condition on $N_{u(p)}L_p$ and the Neumann boundary condition on $T_{u(p)}L_p$, i.e.
$$
\phi(p)|_{N_{u(p)}L_p}=0\quad\&\quad \p_\nu\phi(p)|_{T_{u(p)}L_p}=0,\quad p\in\p N.
$$
Of course $L_p's$ can be a single submanifold, i.e. $L_p=L_q$ for all $p,q\in\p N$. But in Floer theory, boundary points map to different Lagrangian submanifolds in general, see \cite{Fl} for Lagrangian Floer homology and see the end of the first section in \cite{HNS} for Hyperk\"ahler Floer homology with the Lagrangian boundary condition.

When we prove Theorem A, Theorem \ref{thm:spectral resolution} and Theorem \ref{thm:interior regularity} played crucial roles. Corresponding theorems go through for relative mapping spaces under the mixed boundary condition.

\begin{Thm}\label{thm:boundary}
Let $\Delta$ be the Bochner Laplacian on $\bigcap_{k\in\N}\Gamma^{k,2}_\p$. Then the followings hold:
\begin{itemize}
\item[(i)] $\Delta$ is self-adjoint with respect to $L^2$-metric.
\item[(ii)] There exists a discrete spectral resolution of $\Delta$ for $\Gamma^{0,2}_\p$, $\{\phi_\mu,\lambda_\mu\}_{\mu\in\N}$.
\item[(iii)] There are only finitely many non-positive eigenvalues and $\lambda_\mu\sim C\mu^{2/n}$ for some constant $C>0$ as $\mu\to\infty$.
\end{itemize}
\end{Thm}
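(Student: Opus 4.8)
The strategy is to recognize the mixed boundary condition as a self-adjoint \emph{elliptic} boundary condition for the Laplace type operator $\Delta=\nabla^*\nabla$, after which (i)--(iii) follow from the standard theory of elliptic boundary value problems in the same way that Theorem \ref{thm:spectral resolution} follows from interior elliptic theory; see \cite[Chapter 1]{Gi1}. Along $\p N$ the bundle $u^*TM$ splits orthogonally and smoothly (the family $\{L_p\}_{p\in\p N}$ being smooth) as $u^*TM|_{\p N}=\mathcal{T}\oplus\mathcal{N}$ with $\mathcal{T}_p=T_{u(p)}L_p$ and $\mathcal{N}_p=N_{u(p)}L_p$; writing $\phi=\phi^{\mathcal T}+\phi^{\mathcal N}$ for a section over $\p N$, the mixed condition ``$\phi(p)\in T_{u(p)}L_p$, $\p_\nu\phi(p)\in N_{u(p)}L_p$'' is precisely $\phi^{\mathcal N}=0$ (Dirichlet on $\mathcal N$) together with $(\p_\nu\phi)^{\mathcal T}=0$ (Neumann on $\mathcal T$). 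Each of the Dirichlet and Neumann conditions satisfies the Lopatinski--Shapiro complementing condition for a second-order Laplace type operator, and hence so does their direct sum along the smooth splitting; thus $(\Delta,\textrm{mixed BC})$ is an elliptic boundary value problem and one has, as the boundary counterpart of Theorem \ref{thm:interior regularity}, the a priori estimate
$$
\|\phi\|_{W^{k+2,2}}\le c\big(\|\Delta\phi\|_{W^{k,2}}+\|\phi\|_{L^2}\big),\qquad \phi\in\Gamma^{k+2,2}_\p.
$$

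To prove (i), first integrate by parts: for $\phi,\psi\in\bigcap_{k\in\N}\Gamma^{k,2}_\p$ Green's formula gives
$$
\langle\Delta\phi,\psi\rangle_{L^2}-\langle\phi,\Delta\psi\rangle_{L^2}=\int_{\p N}\big(\langle\phi,\p_\nu\psi\rangle_E-\langle\p_\nu\phi,\psi\rangle_E\big)\,\dvol_{\p N},
$$
and decomposing the integrand along $\mathcal T\oplus\mathcal N$ and inserting $\phi^{\mathcal N}=\psi^{\mathcal N}=0$ and $(\p_\nu\phi)^{\mathcal T}=(\p_\nu\psi)^{\mathcal T}=0$ makes every term vanish, so $\Delta$ is symmetric on its domain $D:=\{\phi\in\Gamma^{2,2}(N,u^*TM):\textrm{mixed BC}\}$. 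The boundary estimate shows $\Delta+\mu$ maps $D$ onto $\Gamma^{0,2}_\p$ for $\mu$ large: ellipticity gives the semi-Fredholm property, while symmetry together with the lower bound $\langle\Delta\phi,\phi\rangle_{L^2}\ge 0$ (again from the Green computation with $\psi=\phi$) rules out kernel and cokernel. Hence $\mu$ belongs to the resolvent set; a symmetric operator with a real point in its resolvent set is self-adjoint, and its resolvent is compact because $N$ is compact and $\Gamma^{2,2}_\p\hookrightarrow\Gamma^{0,2}_\p$ is compact.

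Statement (ii) then follows from the spectral theorem applied to the compact self-adjoint operator $(\Delta+\mu)^{-1}$: it yields a discrete spectrum $\lambda_1\le\lambda_2\le\cdots\to\infty$ and an $L^2$-orthonormal eigensection basis $\{\phi_\mu\}$, and iterating the boundary estimate bootstraps each $\phi_\mu$ into $\bigcap_{k\in\N}\Gamma^{k,2}_\p$, so $\{\phi_\mu,\lambda_\mu\}_{\mu\in\N}$ is a discrete spectral resolution for $\Gamma^{0,2}_\p$. Since $\langle\Delta\phi,\phi\rangle_{L^2}=\|\nabla\phi\|_{L^2}^2\ge0$, all eigenvalues are non-negative, so only the finite-dimensional zero eigenspace is non-positive; for a general Laplace type operator $\Delta_0+R$ positivity may fail, but G\aa rding's inequality still bounds the number of eigenvalues $\le0$. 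Finally, the asymptotics $\lambda_\mu\sim C\mu^{2/n}$ come from the small-time heat trace expansion, which for the mixed Dirichlet/Neumann condition has the same leading term as in the closed case,
$$
\mathrm{Tr}\,e^{-t\Delta}=(4\pi t)^{-n/2}\big(\mathrm{rk}(E)\,\Vol(N)+O(t^{1/2})\big),\qquad t\to0^+,
$$
the boundary contributing only the lower-order powers $t^{-(n-1)/2},t^{-(n-2)/2},\dots$ (see \cite[Chapter 1]{Gi1}); Karamata's Tauberian theorem then turns this into $\#\{\mu:\lambda_\mu\le\lambda\}\sim c_n\,\mathrm{rk}(E)\,\Vol(N)\,\lambda^{n/2}$, equivalently $\lambda_\mu\sim C\mu^{2/n}$ as $\mu\to\infty$.

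The step I expect to be the genuine obstacle is verifying that the mixed condition is Lopatinski--Shapiro and that it does not disturb the leading heat coefficient. Once the condition is decoupled along $\mathcal T\oplus\mathcal N$ into an honest Dirichlet problem on $\mathcal N$ and an honest Neumann problem on $\mathcal T$ --- which requires checking that the splitting is compatible with $\p_\nu$ modulo lower-order operators so that the frozen-coefficient half-space model splits accordingly --- both reduce to the classical computations for Dirichlet and Neumann conditions, and everything downstream is the same machinery already used for the closed case in Theorem \ref{thm:spectral resolution}.
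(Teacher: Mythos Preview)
Your proposal is correct. The paper itself does not give a proof of this theorem at all: its entire ``proof'' is the single sentence \emph{See \cite[Chapter 1]{Gi2} with \cite{Gr} or \cite[Theorem 2.8.4]{GLP}}. What you have written is an accurate and well-organized unpacking of precisely the machinery contained in those references---verify that the mixed Dirichlet/Neumann condition is elliptic in the Lopatinski--Shapiro sense, deduce the a priori boundary estimate, get self-adjointness from Green's formula plus surjectivity of $\Delta+\mu$, extract the spectral resolution from the compact resolvent, and read off Weyl asymptotics from the leading heat-trace coefficient via Karamata. Your observation that the boundary integrand $\langle\phi,\p_\nu\psi\rangle_E-\langle\p_\nu\phi,\psi\rangle_E$ vanishes term by term under the splitting $\mathcal T\oplus\mathcal N$ is exactly the reason the mixed condition is symmetric, and your remark that the Bochner Laplacian is actually non-negative (so the ``finitely many non-positive eigenvalues'' clause is trivially satisfied here, with G\aa rding handling general Laplace type operators) is a nice sharpening. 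The only caveat you flag yourself---that decoupling into pure Dirichlet on $\mathcal N$ and pure Neumann on $\mathcal T$ requires the splitting to commute with $\p_\nu$ modulo zeroth-order terms---is the right place to be careful, and it is dealt with in \cite{Gr}; once the frozen-coefficient model problem splits, everything downstream is routine.
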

\begin{proof}
See \cite[Chapter 1]{Gi2} with \cite{Gr} or \cite[Theorem 2.8.4]{GLP}.
\end{proof}

\begin{Thm}\label{thm:bdry2}
Let $(N,\p N)$ be a compact manifold with nonempty boundary. There exists a constant $c>0$ such that for $u\in C^\infty(N)$ with either $u|_{\p N}\equiv 0$ or $\p_\nu u|_{\p N}\equiv0$,
$$
||u||_{W^{k+2,2}(N)}\leq c\big(||\Delta_0 u||_{W^{k,2}(N)}+||u||_{W^{k,2}(N)}\big).
$$
\end{Thm}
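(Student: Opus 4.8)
The plan is to prove this by the classical localization-plus-difference-quotient argument, reducing to the interior estimate of Theorem \ref{thm:interior regularity} away from $\p N$ and to a model half-space estimate near $\p N$.

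First I would fix a finite cover of $N$ by coordinate balls, split into \emph{interior charts}, whose closures miss $\p N$, and \emph{boundary charts}, in which $\p N$ is flattened to $\{x_n=0\}$ and $N$ to the half-ball $\{x_n\ge 0\}$. On each boundary chart I would additionally use \emph{boundary normal coordinates}, the coordinate $x_n$ being geodesic distance to $\p N$, so that the metric has the product-like form $g=dx_n^2+h_{ij}(x',x_n)\,dx^i dx^j$; in such coordinates the Laplace--Beltrami operator has the shape
$$
\Delta_0=-\p_{x_n}^2-b(x',x_n)\,\p_{x_n}+\Delta_{h(x_n)},
$$
with no mixed second derivatives $\p_{x_n}\p_{x_i}$ and with $\Delta_{h(x_n)}$ a purely tangential second-order operator, while the two admissible boundary conditions become $u|_{x_n=0}=0$ (Dirichlet) and $\p_{x_n}u|_{x_n=0}=0$ (Neumann) since $\p_\nu=-\p_{x_n}$ along $\p N$. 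Choosing cut-offs $\chi$ subordinate to the cover with $\sum\chi^2\equiv1$, it suffices to bound $\|\chi u\|_{W^{k+2,2}}$ chart by chart; the terms produced when $\Delta_0$ is commuted past $\chi$ have order $\le k+1$ in $u$ and are absorbed at the end.

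On an interior chart this is precisely Theorem \ref{thm:interior regularity} applied to $\chi u\in C_c^\infty$. On a boundary chart the heart of the matter is the half-space estimate, for which I would run Nirenberg's tangential difference-quotient method: apply the finite-difference operators $D_i^h$ in the tangential directions $x_1,\dots,x_{n-1}$ to $\Delta_0 u=f$, test the weak form against $D_i^{-h}(\chi^2 D_i^h u)$, and use ellipticity of the tangential part of $\Delta_0$ together with the Dirichlet or Neumann condition --- both of which are \emph{preserved} under tangential differencing, so that the boundary term in the integration by parts vanishes in either case --- to get bounds on $D_i^h u$ in $W^{1,2}$ uniform in $h$. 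Iterating this controls every derivative of $u$ of order $\le k+2$ involving at most one $\p_{x_n}$, and the remaining top normal derivative $\p_{x_n}^2u$ is then recovered \emph{algebraically} from the equation, $\p_{x_n}^2u=-f-b\,\p_{x_n}u+\Delta_{h}u$, whose right-hand side is already estimated; differentiating this relation lets one climb inductively from order $k$ to $k+1$. (In the Neumann case one could instead extend $u$ and the metric by even reflection across $\{x_n=0\}$ and quote the interior estimate, but verifying that the reflection lies in $W^{k+2,2}$ needs the same care about normal derivatives, so the difference-quotient route is the cleaner uniform treatment of both conditions.)

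Finally I would reassemble: summing the chart estimates and using $\sum\chi^2\equiv1$ yields $\|u\|_{W^{k+2,2}(N)}\le c\bigl(\|\Delta_0 u\|_{W^{k,2}(N)}+\|u\|_{W^{k,2}(N)}+\|u\|_{W^{k+1,2}(N)}\bigr)$, and the spurious $W^{k+1,2}$ term --- coming from the commutators $[\Delta_0,\chi]$ --- is eliminated by the interpolation inequality $\|u\|_{W^{k+1,2}}\le\varepsilon\|u\|_{W^{k+2,2}}+C_\varepsilon\|u\|_{W^{k,2}}$ and absorption into the left-hand side. The main obstacle is exactly the boundary-chart step: carrying the Dirichlet/Neumann condition correctly through the difference quotients and the integration by parts so that all boundary terms vanish (which is where the product form of the metric and the choice of test function are used), and then patching the local estimates without degrading the structure of the right-hand side. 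Everything else is the bookkeeping already seen in the proof of Corollary \ref{cor:equivalence}; compare \cite[Chapter 1]{Gi2}, \cite{Gr}, and \cite[Theorem 2.8.4]{GLP}.
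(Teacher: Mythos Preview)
Your sketch is correct: this is exactly the standard localization-plus-tangential-difference-quotient argument (Nirenberg's method) for boundary elliptic regularity, with the normal direction recovered algebraically from the equation and the commutator terms absorbed by interpolation. The paper, however, gives no argument at all --- its entire proof of this theorem is the sentence ``The proof can be found in \cite{Jo2} and \cite{We}.'' So there is nothing to compare at the level of strategy; you have simply written out what those references contain, and your outline matches the treatment in Jost and in Wehrheim's appendix. (One small slip: the references you cite at the end, \cite[Chapter 1]{Gi2}, \cite{Gr}, \cite[Theorem 2.8.4]{GLP}, are the ones the paper attaches to Theorem~\ref{thm:boundary}, not to this estimate; for the Calderon--Zygmund-type boundary inequality the paper points to \cite{Jo2} and \cite{We}.)
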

\begin{proof}
The proof can be found in \cite{Jo2} and \cite{We}.
\end{proof}
\noindent\textbf{Proof of Corollary B.}
Making use of the above two theorems together with the Bochner-Weitzenb\"och formula (Theorem \ref{thm:Bochner-Weitzenboch formula}), the corollary is proved by following through the arguments of section 3.\hfill$\square$\\[-1.5ex]

\noindent\textbf{Lagrangian Floer homology.} In this subsection, we justify the boundary condition described above is reasonable for Lagrangian Floer theory. Here we only consider the simplest case and refer to \cite{Fl,Oh} for more general set-up. Let $I$ be an interval $[0,1]$ and $\om_{\C^n}$ be the standard symplectic structure on $(\C^n,i)$ with the compatible metric $g(\cdot,\cdot)=\om_{\C^n}(\cdot,i\cdot)$. We note that $\om_{\C^n}$ exact, i.e. $\om_{\C^n}=d\lambda$ for some 1-form $\lambda$ on $\C^n$ and that  $L=\R^n\!\x\!\{\mathbf{0}\}$ is a Lagrangian submanifold. We consider the space of $W^{k,2}$-paths, $k\in\N$, satisfying the Lagrangian boundary condition.
$$
\Omega^k(L:\C^n):=\big\{\gamma\in W^{k,2}(I,\C^n)\,\big|\,\gamma(p),\,i\p_t\gamma(p)\in L=T_{\gamma(p)}L,\,
p\in\{0,1\}\;\big\}.
$$
This space carries the following action functional.
$$
\AA:\Omega^1(L:\C^n)\pf \R,\quad \AA(\gamma):=\int_{I}\gamma^*\lambda.
$$
It is worth noting that the boundary condition, $i\p_t\gamma(p)\in L$, does not make any trouble to do Lagrangian Floer homology with $\AA$.
Next we consider the $W^{k,2}$-tangent bundle, $k\in\N_0$, along $\gamma\in \Omega^1(L:\C^n)$.
$$
\Omega_\gamma^k=\Omega_\gamma^k(L:\C^n):=\big\{\xi\in W^{k,2}(I,\gamma^*T\C^n)\,\big|\,\xi(p),\,i\p_t\xi(p)\in L=T_{\gamma(p)}L,\,
p\in\{0,1\}\;\big\}.
$$
This boundary condition yields that $\xi$ satisfies the Dirichlet boundary  condition on the second $\R^n$ factor of $\R^n\x \R^n=\C^n$ and the Neumann boundary condition on the first $\R^n$ ($=L$).
A direct computation shows that the Hessian of $\AA$ at $\gamma$ is given by
\bean
\HHH_\gamma:\Omega_\gamma^{k+1}&\pf \Omega_\gamma^k\\
\xi&\longmapsto i\p_t\xi
\eea
This boundary condition is necessary for the well-definedness of the Hessian $\HHH_\gamma$. At least it need to hold that $\HHH_\gamma[\xi](p)\in L$ and this follows from the boundary condition:
$$
\HHH_\gamma[\xi](p)=i\p_t\xi(p)\in L.
$$

\section{Appendix: Some remarks on the local invariant}
As we have observed in the introduction, the invariant $\mathfrak{K}$ has simple formulas for fractal scale Hilbert spaces that
$$
\mathfrak{K}([\ell^{2,f}])(i,j)=[f^{j-i}].
$$
Thus the growth types of fractal functions $f:\N\to(0,\infty)$ determine the (local) invariant for fractal scale Hilbert spaces (or manifolds). In Theorem A, we gave a complete description of the local invariant $\mathfrak{K}$ for mapping spaces $\Map(N,M)$:
$$
\mathfrak{K}([\Map(N,M)])(i,j)=[\mu^{2(j-i)/\dim N}],\quad \mu\in\N.
$$
In this appendix, we construct mapping spaces which are fractal scale Hilbert spaces and whose fractal functions have different growth types from $\Map(N,M)$ we have considered. Thus we provide concrete examples of fractal scale Hilbert spaces with a variety of the invariant formulas.

For a scale Hilbert space $\HH$ which is scale isomorphic to $\ell^{2,f}$, we set
$$
\HH[j]:=\big\{\big(H_{jk},\langle\cdot,\cdot\rangle_{jk}\big)\big\}_{k\in\N_0},\quad j\in\N.
$$
Then $\HH[j]$ is of course scale Hilbert subspace of $\HH$; furthermore it is scale isomorphic to $\ell^{2,f^j}$. According to this simple observation, we can easily construct mapping spaces with various polynomial growth types.  For instance,
$$
\Map(N,\R)[j]=\big\{\big(W^{jk,2}(N,\R),\langle\cdot,\cdot\rangle_{W^{jk,2}}\big)\big\}_{k\in\N_0}\stackrel{sc}{\cong}\ell^{2,f},\quad f(\mu)=\mu^{2j/\dim N}.
$$

In this simple example, honestly speaking, the growth type of fractal functions of mapping spaces is determined by the growth type of eigenvalues of the Laplace-Beltrami operator; since an  elliptic operator of Laplace type is of order 2, $\Map(N,\R)$ (or $\Map(N,M)$ in general) has the growth type $f(\mu)=\mu^{2/\dim N}$. Thus using the following theorem, we can build mapping spaces whose fractal functions have of arbitrary polynomial growth types.

\begin{Thm}\label{thm:general spectral resolution}
Let $P$ be an elliptic self-adjoint operator on $\Gamma(N,E)$ of order $d>0$. Then the following holds:
\begin{itemize}
\item[(i)] There exists a discrete spectral resolution of $P$ for $\Gamma^2(N,E)$, $\{\phi_\mu,\lambda_\mu\}_{\mu\in\N}$.
\item[(ii)] There are only finitely many non-positive eigenvalues and $\lambda_\mu\sim C\mu^{d/n}$ for some constant $C>0$ as $\mu\to\infty$.
\end{itemize}
\end{Thm}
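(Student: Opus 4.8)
The plan is to treat this as a classical fact in the spectral theory of elliptic operators on a closed manifold and to split it into two essentially independent pieces: \emph{discreteness} of the spectrum, coming from a compact resolvent, and the \emph{Weyl-type counting asymptotics}, coming from short-time heat-trace (equivalently resolvent-trace) estimates. Throughout I would make the standing assumption that the leading symbol of $P$ is positive definite — this is the case relevant to the paper, since Laplace type operators have leading symbol $|\xi|^2\cdot\mathrm{Id}$ — so that $P$ is bounded below. For a genuinely indefinite self-adjoint elliptic operator such as a Dirac operator the spectrum is unbounded in both directions, and statement (ii) would have to be reinterpreted; I would flag this explicitly.

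For (i): Gårding's inequality (available because the leading symbol is positive) yields a constant $c>0$ such that $P+c$ is a positive formally self-adjoint elliptic operator of order $d$, and the elliptic estimate on the closed manifold $N$ shows that $P+c$ with domain $\Gamma^{d,2}(N,E)$ is self-adjoint and that $(P+c)^{-1}\colon\Gamma^2(N,E)\to\Gamma^{d,2}(N,E)$ is bounded. Since $N$ is compact, the Rellich--Kondrachov embedding $\Gamma^{d,2}(N,E)\hookrightarrow\Gamma^2(N,E)$ is compact, so $(P+c)^{-1}$ is a compact, positive, self-adjoint operator on $\Gamma^2(N,E)$. The spectral theorem for compact self-adjoint operators then produces an $L^2$-orthonormal basis $\{\phi_\mu\}_{\mu\in\N}$ of eigenvectors, with eigenvalues $\kappa_\mu\to 0^+$ of finite multiplicity; setting $\lambda_\mu=\kappa_\mu^{-1}-c$ recovers the eigenvalues of $P$, which form a discrete set with finite-dimensional eigenspaces and $\lambda_\mu\to+\infty$. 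An elliptic bootstrap — from $P\phi_\mu=\lambda_\mu\phi_\mu$ one gets $\phi_\mu\in\Gamma^{d,2},\Gamma^{2d,2},\dots$, hence $\phi_\mu\in\bigcap_k\Gamma^{k,2}(N,E)=\Gamma(N,E)$ — shows the eigensections are smooth. Since $\lambda_\mu\to+\infty$, only finitely many of them are non-positive, which is the first half of (ii).

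For the asymptotics $\lambda_\mu\sim C\mu^{d/n}$ I would pass through the heat trace. Ellipticity of $P$ with positive leading symbol allows a parametrix construction for $e^{-tP}$ (Seeley's complex powers $P^{-s}$, or the pseudodifferential heat-kernel calculus), giving $\mathrm{Tr}(e^{-tP})\sim a_0\, t^{-n/d}$ as $t\to 0^+$ with a constant $a_0>0$ determined by the principal symbol. Writing this as $\sum_\mu e^{-t\lambda_\mu}\sim a_0\, t^{-n/d}$ and applying the Karamata Tauberian theorem gives $N(\lambda):=\#\{\mu:\lambda_\mu\le\lambda\}\sim \frac{a_0}{\Gamma(n/d+1)}\,\lambda^{n/d}$ as $\lambda\to\infty$; inverting this counting function yields $\lambda_\mu\sim C\mu^{d/n}$ with $C=(\Gamma(n/d+1)/a_0)^{d/n}$. (A more self-contained but longer alternative avoids the heat kernel and uses Dirichlet--Neumann bracketing, comparing $P$ with constant-coefficient model operators on a fine cube decomposition of $N$.)

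The main obstacle is genuinely the Weyl asymptotics: the discreteness in (i) and the finiteness of the non-positive spectrum are soft once one has elliptic regularity and Rellich compactness, but the precise exponent $d/n$ together with the existence of a single constant $C$ require real analytic input — either the uniform parametrix estimates for $e^{-tP}$ (equivalently, the meromorphic continuation of $\zeta_P(s)=\sum_\mu\lambda_\mu^{-s}$ with first pole at $s=n/d$) plus a Tauberian theorem, or the bracketing argument. Since this appendix only needs the statement, I would carry out the reduction above and cite \cite[Chapter 1]{Gi1} and \cite{GLP} for the parametrix and Tauberian steps rather than reproving them.
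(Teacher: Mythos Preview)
Your proposal is correct and aligns with the paper: the paper's own proof is simply the one-line citation ``The proof can be found in \cite[Chapter 1]{Gi1}'', and your sketch (compact resolvent via G{\aa}rding plus Rellich for (i), heat-trace asymptotics plus Karamata for (ii)) is precisely the argument developed there, ending with the same reference. Your explicit caveat about positivity of the leading symbol is well taken and is indeed a standing assumption in that chapter.
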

\begin{proof}
The proof can be found in \cite[Chapter 1]{Gi1}
\end{proof}
Therefore we conclude that scale structures (and hence the invariant) on the spaces of sections of the following form are determined by the order of elliptic self-adjoint operators $P$ and the dimension of domain manifolds $N$:
$$
\XX_P(N,E)=\big\{\big(\Gamma^k_P(N,E),\langle\cdot,\cdot\rangle_{P,k,2}\big)\big\}_{k\in\N_0}
$$
where each level and metric are given by
$$
\Gamma^k_P(N,E)=\{\phi\in\Gamma^2(N,E)\,|\,P^j\phi\in\Gamma^2(N,E),\,1\leq j\leq k\},\quad\langle\phi,\psi\rangle_{P,k,2}=\sum_{j=0}^k\langle P^j\phi,P^j\psi \rangle_{L^2}.
$$
Then following through the argument of the previous sections, we can prove that
$$
\XX_P(N,E)\stackrel{sc}{\cong}\ell^{2,f},\quad f(\mu)=\mu^{\mathrm{ord\,} P/\dim N},
$$
and thus the invariant is of the form
$$
\mathfrak{K}([\XX_P(N,E)])(i,j)=[\mu^{\mathrm{ord\,}P(j-i)/\dim N}].
$$

\vskip20pt

\end{document}